\newtheorem{theorem}{Theorem}[section]
\newtheorem{definition}[theorem]{Definition}
\newtheorem{example}[theorem]{Example}
\newtheorem{proposition}[theorem]{Proposition}
\newtheorem{remark}[theorem]{Remark}
\tikzset{edgee/.style = {> = latex'}}
\newcolumntype{P}[1]{>{\centering\arraybackslash}p{#1}}
\newcolumntype{M}[1]{>{\centering\arraybackslash}m{#1}}
\newcommand{\treea}[4]{
\begin{scope}[shift={(#1)}]
\node [circle,draw=black](1) at (-0.25,-0.25) {#2};
\node [circle,draw=black,inner sep=2pt,fill=black](2) at (-1,1.75) {};
\node [circle,draw=black](3) at (1.5,2) {#3};
\node [circle,draw=black](4) at (3,4) {#4};
\node [circle,draw=black,inner sep=2pt,fill=black] (4') at (3-0.75,6) {};
\node [circle,draw=black,inner sep=2pt,fill=black] (4'') at (3+0.75,6) {};
\node [circle,draw=black,inner sep=2pt,fill=black](5) at (0.75,4) {};
\draw (1)--(2);
\draw (1)--(3);
\draw (3)--(4);
\draw (3)--(5);
\draw (4')--(4)--(4'');
\node at (1.25, -2) {Branches: $1$};
\end{scope}
}
\newcommand{\treeb}[5]{
\begin{scope}[shift={(#1)},xscale=-1]
\node [circle,draw=black](1) at (-0.25,-0.25) {#2};
\node [circle,draw=black,inner sep=2pt,fill=black](2) at (-1,1.75) {};
\node [circle,draw=black](3) at (1.5,2) {#3};
\node [circle,draw=black](4) at (3,4) {#4};
\node [circle,draw=black,inner sep=2pt,fill=black] (4') at (3-0.75,6) {};
\node [circle,draw=black,inner sep=2pt,fill=black] (4'') at (3+0.75,6) {};
\node [circle,draw=black,inner sep=2pt,fill=black](5) at (0.75,4) {};
\draw (1)--(2);
\draw (1)--(3);
\draw (3)--(4);
\draw (3)--(5);
\draw (4')--(4)--(4'');
\node at (1.25, -2) {Branches: #5};
\end{scope}
}
\newcommand{\treec}[4]{
\begin{scope}[shift={(#1)}]
\node [circle,draw=black](1) at (-0.25,-0.25) {#2};
        \node [circle,draw=black,inner sep=2pt,fill=black](2) at (-1.5,1.75) {};
        \node [circle,draw=black](3) at (1.5,2) {#3};
        \node [circle,draw=black,inner sep=2pt,fill=black](4) at (2.5,4) {};
        \node [circle,draw=black](5) at (0,4) {#4};
        \node [circle,draw=black,inner sep=2pt,fill=black](6) at (0.75,6) {};
        \node [circle,draw=black,inner sep=2pt,fill=black](7) at (-0.75,6) {};
        \draw (1)--(2);
        \draw (1)--(3);
        \draw (3)--(4);
        \draw (3)--(5);
        \draw (5)--(6);
        \draw (5)--(7);
        \node at (0.75, -2) {Branches: 1};
\end{scope}
}
\newcommand{\treed}[5]{
\begin{scope}[shift={(#1)},xscale=-1]
\node [circle,draw=black](1) at (-0.25,-0.25) {#2};
        \node [circle,draw=black,inner sep=2pt,fill=black](2) at (-1.5,1.75) {};
        \node [circle,draw=black](3) at (1.5,2) {#3};
        \node [circle,draw=black,inner sep=2pt,fill=black](4) at (2.5,4) {};
        \node [circle,draw=black](5) at (0,4) {#4};
        \node [circle,draw=black,inner sep=2pt,fill=black](6) at (0.75,6) {};
        \node [circle,draw=black,inner sep=2pt,fill=black](7) at (-0.75,6) {};
        \draw (1)--(2);
        \draw (1)--(3);
        \draw (3)--(4);
        \draw (3)--(5);
        \draw (5)--(6);
        \draw (5)--(7);
        \node at (0.5, -2) {Branches: #5};
\end{scope}
}
\newcommand{\treee}[5]{
\begin{scope}[shift={(#1)}]
        \node [circle,draw=black](3) at (1.5,2) {#3};
        \node [circle,draw=black](4) at (3,4) {#4};
        \node [circle,draw=black,inner sep=2pt,fill=black] (4') at (3-0.75,6) {};
        \node [circle,draw=black,inner sep=2pt,fill=black] (4'') at (3+0.75,6) {};
        \node [circle,draw=black](5) at (0,4) {#2};
        \node [circle,draw=black,inner sep=2pt,fill=black](6) at (0.75,6) {};
        \node [circle,draw=black,inner sep=2pt,fill=black](7) at (-0.75,6) {};
        \draw (3)--(4);
        \draw (3)--(5);
        \draw (5)--(6);
        \draw (5)--(7);
        \draw (4')--(4)--(4'');
        \node at (1.5, 0.25) {Branches: #5};
\end{scope}
}
\newcommand\R{\mathbb{R}}
\newcommand\N{\mathbb{N}}
\newcommand{\A}{\mathcal{A}}
\newcommand{\T}{\mathcal{T}}
\newcommand{\ipa}{\mathrm{L}}
\begin{document}

\title[A Branch Statistic for Trees]{A Branch Statistic for Trees: Interpreting Coefficients of the Characteristic Polynomial of Braid Deformations}
 \author{Priyavrat Deshpande}
 \address{Chennai Mathematical Institute}
 \email{pdeshpande@cmi.ac.in}
 \author{Krishna Menon}
 \address{Chennai Mathematical Institute}
 \email{krishnamenon@cmi.ac.in}

\begin{abstract}
A hyperplane arrangement in $\mathbb{R}^n$ is a finite collection of affine hyperplanes.
The regions are the connected components of the complement of these hyperplanes.
By a theorem of Zaslavsky, the number of regions of a hyperplane arrangement is the sum of the absolute values of the coefficients of its characteristic polynomial.
Arrangements that contain hyperplanes parallel to subspaces whose defining equations are $x_i - x_j = 0$ form an important class called the deformations of the braid arrangement.
In a recent work, Bernardi showed that regions of certain deformations are in one-to-one correspondence with certain labeled trees.
In this article, we define a statistic on these trees such that the distribution is given by the coefficients of the characteristic polynomial.
In particular, our statistic applies to well-studied families like extended Catalan, Shi, Linial and semiorder.
\end{abstract}

\keywords{Hyperplane arrangement, combinatorial statistic, braid arrangement, labeled tree.}
\subjclass[2020]{52C35, 05C30}
\maketitle

\section{Introduction}\label{intro}

A \emph{hyperplane arrangement} $\A$ is a finite collection of affine hyperplanes (i.e., codimension $1$ subspaces and their translates) in $\R^n$.
A \emph{region} of $\A$ is a connected component of $\R^n\setminus \bigcup \A$.
The number of regions of $\A$ is denoted by $r(\mathcal{A})$.
The poset of non-empty intersections of hyperplanes in an arrangement $\mathcal{A}$ ordered by reverse inclusion is called its \emph{intersection poset} denoted by $\ipa(\A)$.
The ambient space of the arrangement (i.e., $\mathbb{R}^n$) is an element of the intersection poset; considered as the intersection of none of the hyperplanes.
The \emph{characteristic polynomial} of $\A$ is defined as 
\[\chi_\A (t) := \sum_{x\in\ipa(\A)} \mu(\hat{0},x)\, t^{\dim(x)}\]
where $\mu$ is the M\"obius function of the intersection poset and $\hat{0}$ corresponds to $\R^n$.
Using the fact that every interval of the intersection poset of an arrangement is a geometric lattice, we have
\begin{equation}\label{charform}
    \chi_\A(t) = \sum_{i=0}^n (-1)^{n-i} c_i t^i
\end{equation}
where $c_i$ is a non-negative integer for all $0 \leq i \leq n$ \cite[Corollary 3.4]{stanarr07}.
The characteristic polynomial is a fundamental combinatorial and topological invariant of the arrangement and plays a significant role throughout the theory of hyperplane arrangements.

In this article, our focus is on the enumerative aspects of (rational) arrangements in $\R^n$. 
In that direction we have the following seminal result by Zaslavsky.

\begin{theorem}[\cite{zas75}]\label{zas}
Let $\A$ be an arrangement in $\R^n$. Then the number of regions of $\A$ is given by 
\begin{align*}
   r(\A) &= (-1)^n \chi_{\A}(-1) \\
         &= \sum_{i=0}^n c_i. 
\end{align*}
\end{theorem}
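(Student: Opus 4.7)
The second equality is immediate from \eqref{charform}: substituting $t = -1$ gives
\[
\chi_\A(-1) \;=\; \sum_{i=0}^n (-1)^{n-i} c_i (-1)^i \;=\; (-1)^n \sum_{i=0}^n c_i,
\]
so multiplying by $(-1)^n$ yields $(-1)^n \chi_\A(-1) = \sum_{i=0}^n c_i$. Hence the substantive task is to prove $r(\A) = (-1)^n \chi_\A(-1)$.

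The plan is to induct on $|\A|$ using the classical \emph{deletion--restriction} method. The base case $\A = \emptyset$ is clear: $\R^n$ is the only region, $\ipa(\A) = \{\R^n\}$, so $\chi_\A(t) = t^n$ and $(-1)^n \chi_\A(-1) = 1 = r(\A)$. For the inductive step, pick any $H \in \A$ and consider the \emph{deletion} $\A' := \A \setminus \{H\}$, an arrangement in $\R^n$, and the \emph{restriction} $\A^H := \{K \cap H : K \in \A',\, K \cap H \neq \emptyset\}$, an arrangement inside $H \cong \R^{n-1}$. The proof will then rest on two recurrences:
\begin{align*}
    r(\A) &= r(\A') + r(\A^H),\\
    \chi_\A(t) &= \chi_{\A'}(t) - \chi_{\A^H}(t).
\end{align*}
Granted these, the inductive hypothesis applied to $\A'$ (which has $|\A| - 1$ hyperplanes in $\R^n$) and to $\A^H$ (an arrangement in $\R^{n-1}$) gives
\[
    r(\A) \;=\; (-1)^n \chi_{\A'}(-1) + (-1)^{n-1} \chi_{\A^H}(-1) \;=\; (-1)^n \bigl(\chi_{\A'}(-1) - \chi_{\A^H}(-1)\bigr) \;=\; (-1)^n \chi_\A(-1),
\]
which is what we want.

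For the region recurrence, the geometric picture is that each region $R'$ of $\A'$ either lies entirely in one open half-space determined by $H$ (and is then a region of $\A$) or is cut by $H$ into two regions of $\A$. In the latter case $R' \cap H$ is a full-dimensional region of $\A^H$, and I would verify that the assignment $R' \mapsto R' \cap H$ sets up a bijection between the split regions of $\A'$ and the regions of $\A^H$. Summing contributions then gives $r(\A) = r(\A') + r(\A^H)$.

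For the polynomial recurrence, I would split the defining sum of $\chi_\A(t)$ over $\ipa(\A)$ according to whether $x \subseteq H$ or not. Elements $x \not\subseteq H$ are naturally identified with $\ipa(\A')$, while the elements $x \subseteq H$ form an upper set (in the reverse-inclusion order) starting at $H$ that can be identified, as a poset, with $\ipa(\A^H)$ upon regarding $H$ as the $\hat 0$ of $\A^H$. The signed contribution $-\chi_{\A^H}(t)$ then has to be extracted from the Möbius values $\mu(\hat 0, x)$ for $x \subseteq H$, which requires invoking the basic Möbius identity $\sum_{y \le x} \mu(\hat 0, y) = 0$ to rewrite $\mu_{\ipa(\A)}(\hat 0, x)$ in terms of $\mu_{\ipa(\A^H)}(H, x)$. \textbf{The main obstacle} is precisely this polynomial recurrence: while the region count is a geometric bookkeeping argument, getting the Möbius bookkeeping to produce exactly the signed difference $\chi_{\A'}(t) - \chi_{\A^H}(t)$ is the delicate step on which the whole induction hinges.
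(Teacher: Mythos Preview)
The paper does not give a proof of this theorem; it is quoted with the citation \cite{zas75} and used as background. So there is no ``paper's own proof'' to compare against. Your deletion--restriction outline is the standard argument (essentially the one in \cite{stanarr07}, Lecture~2) and is correct in broad strokes.

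One caution about your sketch of the polynomial recurrence: the claimed identification of $\{x \in \ipa(\A) : x \not\subseteq H\}$ with $\ipa(\A')$ is not a bijection in general. An element of $\ipa(\A')$ can perfectly well be contained in $H$; for instance, take three distinct lines through the origin in $\R^2$ and let $H$ be one of them---then the origin lies in $\ipa(\A')$ yet is contained in $H$. So the Möbius bookkeeping you correctly flag as ``the delicate step'' does not go through by simply splitting $\ipa(\A)$ according to containment in $H$. The clean route is to establish $\chi_\A(t) = \chi_{\A'}(t) - \chi_{\A^H}(t)$ via Whitney's theorem, writing
\[
\chi_\A(t) = \sum_{\mathcal{B} \subseteq \A,\ \mathcal{B}\ \text{central}} (-1)^{|\mathcal{B}|}\, t^{\dim(\cap \mathcal{B})},
\]
and then splitting the sum over sub\emph{arrangements} $\mathcal{B}$ according to whether $H \in \mathcal{B}$; that split \emph{is} a genuine bijection and gives the recurrence immediately. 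With that adjustment, your induction closes exactly as you wrote.
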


When the regions of an arrangement are in bijection with a certain combinatorially defined set, one could ask if there is a corresponding `statistic' on the set whose distribution is given by the $c_i$'s.
For example, the regions of the braid arrangement in $\R^n$ (whose hyperplanes are given by the equations $x_i-x_j = 0$ for $1\leq i<j\leq n$) correspond to the $n!$ permutations of $[n]$.
The characteristic polynomial of this arrangement is 
$t(t-1)\cdots(t-n+1)$ \cite[Corollary 2.2]{stanarr07}. 
Hence, $c_i$'s are the unsigned Stirling numbers of the first kind. 
Consequently, the distribution of the statistic `number of cycles' on the set of permutations is given by the coefficients of the characteristic polynomial.

In this paper, we consider arrangements where each hyperplane is of the form $x_i-x_j=s$ for some $s \in \mathbb{Z}$.
Such arrangements are called \textit{deformations of the braid arrangement}.
Recently, Bernardi \cite{ber} obtained a method to count the regions of any deformation of the braid arrangement using certain objects called \textit{boxed trees}.
For certain special deformations, which he calls \textit{transitive}, he also obtained an explicit bijection between the regions of the arrangement and a certain set of trees.
Our main aim is to obtain a statistic on such trees whose distribution is given by the coefficients of the characteristic polynomial of the corresponding arrangement.

For any finite set of integers $S$, we associate a deformation of the braid arrangement $\A_S(n)$ in $\R^n$ with hyperplanes
\begin{equation*}
    \{x_i-x_j=k \mid  k \in S,\ 1 \leq i<j \leq n\}.
\end{equation*}
Important examples of such arrangements are the Catalan, Shi, Linial and semiorder arrangements.
These correspond to $S=\{-1,0,1\}$, $\{0,1\}$, $\{1\}$, and $\{-1,1\}$ respectively.
For any $m \geq 1$, the extended Catalan arrangement, or $m$-Catalan arrangement, in $\R^n$ is $\A_S(n)$ where $S=\{-m,\ldots,m\}$.
Similarly, the extended Shi, Linial, and semiorder arrangements correspond to $S=\{-m+1,\ldots,m\}$, $\{-m+1,\ldots,m\} \setminus \{0\}$, and $\{-m,\ldots,m\} \setminus \{0\}$ respectively.

If the set $S$ satisfies certain conditions (see \Cref{transdef}), then the arrangements $\A_S(n)$ are called transitive.
The extended Catalan, Shi, Linial, and semiorder arrangements are all transitive.
We note here that Bernardi \cite{ber} considers a larger class of arrangements to be transitive, but we only focus on arrangements of the form described above.

From \cite[Theorem 3.8]{ber}, we know that if $S$ is transitive, then the regions of $\A_S(n)$ are in bijection with a certain set of trees $\T_S(n)$ (see \Cref{Streesdef}).
For example, when $S=\{0,1\}$ which corresponds to the Shi arrangement, $\T_{\{0,1\}}(n)$ is the set of labeled binary trees with $n$ nodes where any right node has a label smaller than its parent.

\begin{example}
A tree in $\T_{\{0,1\}}(4)$ is shown in \Cref{binarytree}.
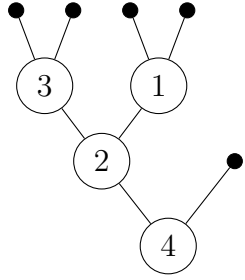
\begin{figure}[!htbp]
        \centering
        \begin{tikzpicture}[xscale=-1,scale=0.5]
       \node [circle,draw=black](1) at (-0.25,-0.25) {$4$};
        \node [circle,draw=black,inner sep=2pt,fill=black](2) at (-2,2) {};
        \node [circle,draw=black](3) at (1.5,2) {$2$};
        \node [circle,draw=black](4) at (3,4) {$3$};
        \node [circle,draw=black,inner sep=2pt,fill=black] (4') at (3-0.75,6) {};
        \node [circle,draw=black,inner sep=2pt,fill=black] (4'') at (3+0.75,6) {};
        \node [circle,draw=black](5) at (0,4) {$1$};
        \node [circle,draw=black,inner sep=2pt,fill=black](6) at (0.75,6) {};
        \node [circle,draw=black,inner sep=2pt,fill=black](7) at (-0.75,6) {};
        \draw (1)--(2);
        \draw (1)--(3);
        \draw (3)--(4);
        \draw (3)--(5);
        \draw (5)--(6);
        \draw (5)--(7);
        \draw (4')--(4)--(4'');
    \end{tikzpicture}
        \caption{A tree in $\T_{\{0,1\}}(4)$}
        \label{binarytree}
    \end{figure}
\end{example}

For such a tree, we define the \textit{trunk} to be the path from the root to the leftmost leaf.
Using the nodes on the trunk, we obtain a sequence of numbers.
A node in this sequence that is greater than all the nodes after it is called a \textit{branch node}.
For the tree in \Cref{binarytree}, the sequence on the trunk is $4,2,3$ and the branch nodes are $4$ and $3$.
These definitions can be generalized to trees in $\T_S(n)$ for other sets $S$.

The main theorem of this article is:
\begin{theorem}
    For a transitive set $S$, the absolute value of the coefficient of $t^j$ in $\chi_{\A_S(n)}(t)$ is the number of trees in $\T_S(n)$ with $j$ branch nodes.
\end{theorem}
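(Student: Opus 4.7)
The plan is to establish the equivalent identity
\[
\chi_{\A_S(n)}(t) \;=\; \sum_{T \in \T_S(n)} (-1)^{n - b(T)}\, t^{b(T)},
\]
where $b(T)$ denotes the number of branch nodes of $T$; matching the coefficients of $t^j$ against \eqref{charform} then yields the theorem. I would proceed by induction on $n$. The base case $n = 1$ is immediate: $\T_S(1)$ contains a unique tree whose single node is trivially a branch node, and $\chi_{\A_S(1)}(t) = t$, so both sides equal $t$.

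For the inductive step, the tree side would be controlled by a deletion operation: given $T \in \T_S(n)$, delete the leaf at the top of the trunk (the leftmost leaf) and relabel the remaining nodes to obtain a tree $T' \in \T_S(n-1)$. This gives a surjection $\T_S(n) \to \T_S(n-1)$, and the fibers can be enumerated by counting the valid positions for a new trunk leaf subject to the $S$-admissibility rules defining $\T_S$. The crucial bookkeeping is whether the reinserted node is itself a branch node and whether its insertion demotes any previous branch node of $T'$. Summing $(-1)^{n-b(T)}t^{b(T)}$ over each fiber should collapse into a weight that depends only on the trunk statistics of $T'$ and on $S$.

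On the arrangement side, the matching recursion would come from a deletion--restriction argument on a specific family of hyperplanes of $\A_S(n)$, for example those of the form $x_i - x_n = s$ with $s \in S$ and $i < n$. Transitivity of $S$ is essential here because it guarantees that each restriction is again of the form $\A_S(m)$ for some $m < n$, so the recursion closes within the transitive family and one can invoke the inductive hypothesis on both the deleted and the restricted pieces. Matching the fiber-wise generating polynomial of branch nodes against this arrangement-level recursion then completes the induction.

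The main obstacle will be the combinatorial matching of the two recursions; in particular, verifying that the trunk-and-branch statistic precisely mirrors the dimension drop incurred at each step of the deletion--restriction is delicate, because the trunk is a global feature of the tree, whereas deletion--restriction is inherently local. A possibly cleaner alternative is to start from Bernardi's signed enumeration of $\chi_{\A_S(n)}(t)$ over boxed trees in \cite{ber}: grouping boxed trees by their underlying tree $T \in \T_S(n)$, one would construct a sign-reversing involution on the boxings of $T$ whose fixed points are supported exactly at the branch nodes of $T$, so that each $T$ contributes precisely $(-1)^{n-b(T)} t^{b(T)}$ while all other boxings cancel in pairs. Defining this involution so that it respects both the trunk structure and the $S$-admissibility conditions is the technical heart of the argument.
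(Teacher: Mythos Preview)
Your plan has a genuine gap in the deletion--restriction leg. The claim that transitivity of $S$ forces each restriction of $\A_S(n)$ to again be an arrangement of the form $\A_S(m)$ is false: restricting to $x_i - x_j = s$ identifies $x_j = x_i - s$, and a hyperplane $x_k - x_j = t$ then becomes $x_k - x_i = t - s$, where $t - s$ need not lie in $S$ even for transitive $S$. For the Shi set $S = \{0,1\}$, restricting $\A_S(3)$ to $x_1 - x_2 = 1$ sends $x_2 - x_3 = 1$ to $x_1 - x_3 = 2$, so the restriction is $\A_{\{0,1,2\}}(2)$, not $\A_{\{0,1\}}(2)$. Thus the recursion does not close within the family $\{\A_S(m)\}_m$ and you cannot invoke the inductive hypothesis. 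The tree-side deletion is also ill-posed as written: removing the leftmost \emph{leaf} does not decrease the node count, and the last trunk \emph{node} $v_k$ may have node children off the trunk, so excising $v_k$ does not yield an $(m+1)$-ary tree on $n-1$ nodes either.

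The paper takes an entirely different and much shorter route. It uses that $(\A_S(1), \A_S(2), \ldots)$ is an exponential sequence of arrangements, so
\[
\sum_{n \ge 0} \chi_{\A_S(n)}(t)\,\frac{x^n}{n!} \;=\; \Bigl(\sum_{n \ge 0} (-1)^n\, r(\A_S(n))\,\frac{x^n}{n!}\Bigr)^{-t}.
\]
Combined with the compositional identity $\sum_{n,j} c_j(n)\, t^j x^n/n! = \bigl(\sum_n h(n)\, x^n/n!\bigr)^{t}$ and Bernardi's bijection $r(\A_S(n)) = |\T_S(n)|$, the theorem reduces to a purely structural check: that the branch decomposition makes $\T_S$ into an exponential family, i.e.\ that a tree with $j$ branches is the same datum as an unordered set of $j$ one-branch trees on the blocks of a partition of $[n]$. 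That check is essentially two lines---cutting the trunk at branch nodes and re-gluing both stay inside $\T_S$ because a branch node $u$ is larger than its leftmost child $v$ and $\operatorname{lsib}(v)=0 \in S \cup \{0\}$, so Condition~$S$ is automatic at the glue points. No induction, no deletion--restriction, and no sign-reversing involution on boxed trees is needed; the ESA machinery does globally what your proposal is trying to reconstruct locally.
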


The article begins with a short account of Bernardi's work \cite{ber} in \Cref{prelim}.
In \cref{bstat} the branch statistic is introduced and the main theorem is proved.
In \cref{mcat} we derive some properties of the coefficients of the characteristic polynomial and in particular study the extended Catalan arrangements.

An extended abstract of this article \cite{extabs} will appear in a proceedings volume of S\'{e}minaire Lotharingien Combinatoire for FPSAC 2022.

\section{Preliminaries}\label{prelim}

A \textit{tree} is a graph with no cycles.
A \textit{rooted tree} is a tree with a distinguished vertex called the root.
We will draw rooted trees with their root at the bottom.
Children of a vertex $v$ in a rooted tree are those vertices $w$ that are adjacent to $v$ and such that the unique path from the root to $w$ passes through $v$.
Similarly, we can define the parent of a vertex $v$ to be the vertex $w$ for which $v$ is the child of $w$.
Any non-root vertex has a unique parent.
All the vertices that have at least one child are called \textit{nodes} and those that do not are called \textit{leaves}.

A \textit{rooted plane tree} is a rooted tree with a specified ordering for the children of each node.
When drawing a rooted plane tree, the children of any node will be ordered from left to right.
The \textit{left siblings} of a vertex $v$ are the vertices that are also children of the parent of $v$ but are to the left of $v$.
We denote the number of left siblings of $v$ as $\operatorname{lsib}(v)$.

\begin{definition}
    An \textit{$(m+1)$-ary tree} is a rooted plane tree where each node has exactly $(m+1)$ children.
    We will denote by $\mathcal{T}^{(m)}(n)$ the set of all $(m+1)$-ary trees with $n$ nodes labeled with distinct elements from $[n]$.
\end{definition}

For trees in $\mathcal{T}^{(m)}(n)$, we will denote the node having label $i \in [n]$ by just $i$.

\begin{definition}
    If a node $i$ in a tree $T \in \mathcal{T}^{(m)}(n)$ has at least one child that is a node, the \emph{cadet} of $i$ is the rightmost such child, which we denote by $\operatorname{cadet}(i)$.
\end{definition}

\begin{example}
\Cref{binarytree} shows an element of $\mathcal{T}^{(1)}(4)$ where
\begin{itemize}
    \item $4$ is the root,
    \item  $\operatorname{lsib}(2)=0$, $\operatorname{lsib}(3)=0$, $\operatorname{lsib}(1)=1$,
    \item $\operatorname{cadet}(4)=2$, and $\operatorname{cadet}(2)=1$.
\end{itemize}
\end{example}

\begin{definition}\label{Streesdef}
    For any finite set of integers $S$ with $m$ = $\operatorname{max}\{ |s| \mid  s\in S\}$, define $\mathcal{T}_S(n)$ to be the set of trees in $\mathcal{T}^{(m)}(n)$, such that if $\operatorname{cadet}(i)=j$:
    \begin{itemize}
        \item $\operatorname{lsib}(j) \notin S \cup \{0\}$ $\Rightarrow$ $i<j$.
        \item $-\operatorname{lsib}(j) \notin S$ $\Rightarrow$ $i>j$.
    \end{itemize}
\end{definition}

Recall that for any finite set of integers $S$, we defined the arrangement $\mathcal{A}_S(n)$ as the deformation of the braid arrangement in $\mathbb{R}^n$ with hyperplanes
\begin{equation*}
    \{x_i-x_j=k \mid  k \in S,\ 1 \leq i<j \leq n\}.
\end{equation*}
Though Bernardi \cite{ber} derived results for more general deformations, we will only be focused on these.

\begin{definition}\label{transdef}
    A finite set of integers $S$ is said to be \textit{transitive} if for any $s,t \notin S$,
    \begin{itemize}
        \item $st>0$ $\Rightarrow$ $s+t \notin S$.
        \item $s>0$ and $t\leq 0$ $\Rightarrow$ $s-t \notin S$ and $t-s \notin S$.
    \end{itemize}
\end{definition}

\begin{example}
As mentioned in \Cref{intro}, for any $m \geq 1$, the sets $\{-m,\ldots,m\}$, $\{-m+1,\ldots,m\}$, $\{-m,\ldots,m\} \setminus \{0\}$, and $\{-m+1,\ldots,m\} \setminus \{0\}$ are all transitive.
\end{example}

We can now state the result for arrangements $\mathcal{A}_S(n)$ where $S$ is transitive.

\begin{theorem}{\cite[Theorem 3.8]{ber}}
    For any transitive set of integers $S$, the regions of the arrangement $\mathcal{A}_S(n)$ are in bijection with the trees in $\mathcal{T}_S(n)$.
\end{theorem}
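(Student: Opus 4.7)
The plan is to prove the equivalent identity
\[
P_n(t) := \sum_{T \in \T_S(n)} t^{\mathrm{branch}(T)} = (-1)^n \chi_{\A_S(n)}(-t)
\]
by induction on $n$, via matching decompositions on each side.

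On the tree side, the first key observation is that for $T \in \T_S(n)$, the branch nodes are exactly the right-to-left maxima of the label sequence along the trunk $v_1, \ldots, v_k$; only labels on the trunk contribute to $\mathrm{branch}(T)$, since off-trunk subtrees are irrelevant to the definition. So I would decompose $T$ by (i) the trunk length $k$, (ii) the set of trunk labels $L \subseteq [n]$, (iii) the permutation of $L$ along the trunk, and (iv) the configuration of subtrees placed at the non-leftmost-child slots of the $v_i$'s, using labels in $[n]\setminus L$ and satisfying the lsib-constraints of \Cref{Streesdef}. Since $\mathrm{branch}(T)$ depends only on datum (iii), summing over permutations of $L$ yields the classical Stirling factor $t(t+1)\cdots(t+k-1)$, and the remaining sum over trunk lengths, label partitions, and subtree configurations expresses $P_n(t)$ as a polynomial in $t$ whose coefficients are purely combinatorial enumerations.

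On the arrangement side, I would derive the parallel expansion of $(-1)^n\chi_{\A_S(n)}(-t)$. Under Bernardi's bijection, a ``trunk of length $k$'' should correspond to a distinguished family of flats in the intersection poset whose combined Möbius contribution is exactly the falling factorial $t(t-1)\cdots(t-k+1)$ up to sign, matching $\chi_{\A_\emptyset(k)}$; the off-trunk data then parameterizes how these strata sit inside the larger arrangement. The transitivity hypothesis on $S$ is precisely the condition that makes the Möbius function factor multiplicatively across blocks, so that each subtree slot contributes an independent smaller $\chi_{\A_S(|L'|)}$.

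The main obstacle will be matching the two expansions term-by-term: one must prove that the tree-side off-trunk configurations (with their constraints from \Cref{Streesdef}) are in bijection with the relevant arrangement-side strata, and that the local multiplicities match Möbius values. Transitivity is where this multiplicativity enters on both sides, and checking the correspondence carefully is the technical core. Once the bijective correspondence is established, the inductive step closes by the classical identification of the Stirling factor with $(-1)^k\chi_{\A_\emptyset(k)}(-t)$ for the braid arrangement, combined with the inductive hypothesis applied to each off-trunk $P_{|L'|}$.
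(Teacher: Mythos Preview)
There are two fundamental problems with your proposal.

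First, the paper does not prove this theorem at all: it is quoted from Bernardi \cite[Theorem~3.8]{ber} as a black box, and the paper's own contribution is the later result (\Cref{statthm}) about the branch statistic. So there is no ``paper's proof'' to compare against; any attempt you make would have to reproduce Bernardi's argument, which constructs an explicit bijection between regions and trees via a sign-vector/drift encoding. Your outline does nothing of the sort.

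Second, and more seriously, your proposal does not address the stated theorem. The identity
\[
\sum_{T \in \T_S(n)} t^{\mathrm{branch}(T)} \;=\; (-1)^n \chi_{\A_S(n)}(-t)
\]
is not equivalent to the existence of a bijection between regions and $\T_S(n)$. At $t=1$ it gives only equality of cardinalities, which is strictly weaker than a bijection; for general $t$ it is the content of \Cref{statthm}, a different (and later) theorem in the paper. Moreover, your sketch of the ``arrangement side'' explicitly invokes ``Bernardi's bijection'' to interpret trunk data as flats --- but Bernardi's bijection \emph{is} the theorem you are supposed to be proving, so this is circular. Finally, the claim that transitivity makes the M\"obius function ``factor multiplicatively across blocks'' is not what transitivity does here; transitivity is a combinatorial condition that guarantees the tree constraints in \Cref{Streesdef} are consistent with the region structure, and it enters Bernardi's proof in a completely different way. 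You have conflated the cited background result with the paper's main theorem and proposed a circular argument for the wrong statement.
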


Before looking at the characteristic polynomials of such arrangements, we recall a few results from \cite{ec2}.
Suppose that $c: \N \rightarrow \N$ is a function and for each $n,j \in \N$, we define
\begin{equation*}
    c_j(n)=\sum_{\{B_1,\ldots,B_j\} \in \Pi_n} c(|B_1|)\cdots c(|B_j|)
\end{equation*}
where $\Pi_n$ is the set of partitions of $[n]$.
Define for each $n \in \N$,
\begin{equation*}
    h(n) = \sum_{j=0}^{n}c_j(n).
\end{equation*}
From \cite[Example 5.2.2]{ec2}, we know that in such a situation,
\begin{equation*}
    \sum_{n,j\geq 0}c_j(n)t^j\frac{x^n}{n!} = \left(\sum_{n\geq 0}h(n)\frac{x^n}{n!}\right)^t.
\end{equation*}
Informally, we consider $h(n)$ to be the number of ``structures'' that can be placed on an $n$-set where each structure can be uniquely broken up into a disjoint union of ``connected sub-structures''.
Here $c(n)$ denotes the number of connected structures on an $n$-set and $c_j(n)$ denotes the number of structures on an $n$-set with exactly $j$ connected sub-structures.

We now consider the characteristic polynomials of arrangements of the form $\A_S(n)$.
For a fixed set $S$, the sequence of arrangements $(\A_S(1),\A_S(2),\ldots)$ forms what is called an \textit{exponential sequence of arrangements} (ESA).

\begin{definition}{\cite[Definition 5.14]{stanarr07}}
A sequence of arrangements $(\A_1,\A_2,\ldots)$ is called an ESA if
\begin{itemize}
    \item $\A_n$ is a braid deformation in $\R^n$.
    \item For any $k$-subset $I$ of $[n]$, the arrangement
    \begin{equation*}
        \A_n^I = \{H \in \A_n \mid H \text{ is of the form $x_i-x_j=s$ for some $i,j \in I$}\}
    \end{equation*}
    satisfies $\ipa(\A_n^I) \cong \ipa(\A_k)$ (isomorphic as posets).
\end{itemize}
\end{definition}

The result on ESAs that we will need is the following.

\begin{theorem}{\cite[Theorem 5.17]{stanarr07}}
    If $(\A_1,\A_2,\ldots)$ is an ESA, then
    \begin{equation*}
        \sum_{n\geq 0}\chi_{\A_n}(t)\frac{x^n}{n!} = \left(\sum_{n\geq 0}(-1)^nr(\A_n)\frac{x^n}{n!}\right)^{-t}.
    \end{equation*}
\end{theorem}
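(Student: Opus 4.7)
The plan is to combine the ESA hypothesis with the exponential formula stated just above in the excerpt. The crux is to express $\chi_{\A_n}(t)$ as a sum indexed by set partitions of $[n]$ whose summands factorise across blocks, to show that the resulting ``connected'' contribution is linear in $t$, and then to specialise at $t=-1$ and invoke Zaslavsky's theorem to identify the exponent with the region-count generating function.

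The first step is a product decomposition of $\ipa(\A_n)$ indexed by $\Pi_n$. For each $y\in\ipa(\A_n)$, let $\pi(y)$ be the partition of $[n]$ whose blocks are the equivalence classes of the relation $i\sim j$ iff $x_i-x_j$ is constant on $y$. For each block $B\in\pi(y)$, the hyperplanes of $\A_n^B$ containing $y$ cut out a ``connected'' element $y_B$ of $\ipa(\A_n^B)$, which the ESA isomorphism $\ipa(\A_n^B)\cong\ipa(\A_{|B|})$ identifies with an element of $\ipa^{\mathrm{conn}}(\A_{|B|})$. Hyperplanes whose two indices lie in different blocks of $\pi(y)$ do not contain $y$, so the interval $[\hat 0,y]$ in $\ipa(\A_n)$ is the direct product $\prod_{B\in\pi(y)}[\hat 0, y_B]$ of intervals in $\ipa(\A_{|B|})$. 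Standard poset theory then gives $\mu(\hat 0,y)=\prod_B \mu(\hat 0, y_B)$, and codimensions add so that $\dim y=\sum_B \dim y_B$. Setting $\chi^{\mathrm{conn}}_{\A_k}(t):=\sum_{y\in\ipa^{\mathrm{conn}}(\A_k)}\mu(\hat 0,y)t^{\dim y}$ and regrouping the defining sum for $\chi_{\A_n}(t)$ by the value of $\pi(y)$, one obtains
\[
\chi_{\A_n}(t)=\sum_{\pi\in\Pi_n}\prod_{B\in\pi}\chi^{\mathrm{conn}}_{\A_{|B|}}(t).
\]

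The key observation is that $\chi^{\mathrm{conn}}_{\A_k}(t)$ is \emph{linear} in $t$: every $y\in \ipa^{\mathrm{conn}}(\A_k)$ is an affine line in $\R^k$, because the connectivity of its defining equations forces all pairwise differences $x_i-x_j$ to be determined constants, so $\dim y=1$. Writing $\chi^{\mathrm{conn}}_{\A_k}(t)=t\cdot c(k)$ with $c(k):=\sum_{y\in \ipa^{\mathrm{conn}}(\A_k)}\mu(\hat 0, y)$, the previous identity becomes $\chi_{\A_n}(t)=\sum_j t^j c_j(n)$ with $c_j(n)=\sum_{\pi\colon|\pi|=j}\prod_B c(|B|)$ and $h(n):=\sum_j c_j(n)=\sum_\pi \prod_B c(|B|)$ matching the setup of the exponential formula in the excerpt. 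That formula yields
\[
\sum_{n\geq 0}\chi_{\A_n}(t)\frac{x^n}{n!}=\Bigl(\sum_{n\geq 0}h(n)\frac{x^n}{n!}\Bigr)^{t}.
\]

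To identify the base, set $t=-1$ and invoke Zaslavsky's identity $\chi_{\A_n}(-1)=(-1)^n r(\A_n)$: the left-hand side becomes $\sum_n (-1)^n r(\A_n)\frac{x^n}{n!}$, forcing $\sum_n h(n)\frac{x^n}{n!}=\bigl(\sum_n (-1)^n r(\A_n)\frac{x^n}{n!}\bigr)^{-1}$. Substituting back into the previous display gives the stated identity. The main technical obstacle is the product decomposition of $[\hat 0,y]$: this is where the ESA hypothesis enters essentially, ensuring that hyperplanes between distinct blocks impose no relations on hyperplanes within a single block, so that Möbius and dimension both factorise. Once this decomposition is in place, the rest is a direct calculation with the exponential formula and Zaslavsky's theorem.
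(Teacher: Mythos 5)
Your argument is correct in substance. Bear in mind that the paper does not prove this theorem at all --- it quotes it from \cite{stanarr07} --- so the relevant comparison is with Stanley's proof of Theorem 5.17. Stanley argues via Whitney's theorem: $\chi_{\A_n}(t)=\sum_{\mathcal{B}}(-1)^{|\mathcal{B}|}t^{\,n-\operatorname{rank}(\mathcal{B})}$, summed over \emph{central subarrangements} $\mathcal{B}\subseteq\A_n$; the partition of $[n]$ is read off from the connected components of the graph with an edge $ij$ for each hyperplane $x_i-x_j=s$ in $\mathcal{B}$, the rank of $\mathcal{B}$ is $n$ minus the number of components (so the $t$-exponent is the number of blocks), and the ESA condition makes the ``connected'' weight depend only on block size; the exponential formula and the $t=-1$/Zaslavsky specialization then finish exactly as in your last paragraph. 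You instead work directly in the intersection poset, grouping flats $y$ by the constancy partition of the differences $x_i-x_j$, factoring $[\hat 0,y]$ as a product over blocks, and observing that connected flats are affine lines. The two routes are equivalent in content (Whitney's theorem is itself obtained by M\"obius summation over such intervals), but they trade work differently: Whitney's version gets the blockwise factorization for free from disjointness of edge sets, while yours must verify the interval product decomposition --- which you do correctly, since a hyperplane containing $y$ must have both indices in one block, and within-block subarrangements involve disjoint variables. What your version buys is that the linearity in $t$ of the connected contribution is geometrically transparent ($\dim y_B=1$), which is precisely the ``connected structure'' mechanism the paper goes on to exploit in Section 3.

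Three points to tighten. First, the exponential formula as quoted in the paper assumes $c\colon\N\to\N$, whereas your $c(k)=\sum_{y}\mu(\hat 0,y)$ is signed (its sign is $(-1)^{k-1}$, since connected flats have codimension $k-1$); the identity is a formal power series fact valid for arbitrary integer or real weights, following from $\exp\bigl(tC(x)\bigr)=\bigl(\exp C(x)\bigr)^{t}$, so either cite it in that generality or derive it this way rather than from the $\N$-valued statement. Second, the ESA hypothesis supplies only an abstract poset isomorphism $\ipa(\A_n^{B})\cong\ipa(\A_{|B|})$, so to transfer dimension, connectedness, and M\"obius values you should note that these posets are graded by codimension, that connected elements are exactly those of maximal rank $|B|-1$, and that rank and $\mu$ are poset invariants; with that said, $c(|B|)$ indeed depends only on $|B|$. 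Third, your closing remark slightly misplaces where ESA enters: the product decomposition of $[\hat 0,y]$ holds for \emph{any} braid deformation (cross-block hyperplanes simply do not contain $y$); the ESA hypothesis is needed only for the block-size-only dependence of the connected weight. None of these affects the validity of the proof.
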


\begin{remark}
We note that this is also a special case of \cite[Theorem 5.2]{ber}.
\end{remark}

Using this result, the form of a characteristic polynomial given in \eqref{charform}, and the above discussion on connected structures, we note that interpreting the coefficients of the polynomial $\chi_{\A_S(n)}(t)$ is equivalent to defining a notion of ``connected structures'' for trees in $\T_S(n)$.
We do this in the next section.

\section{A branch statistic}\label{bstat}

A \textit{label set} is a finite set of positive integers.
For any label set $V$, we define $\T^{(m)}(V)$ to be the set of $(m+1)$-ary trees with $|V|$ nodes labeled distinctly using $V$.
Note that $\T^{(m)}([n])=\T^{(m)}(n)$.

We now describe the method we use to break up a tree in $\T^{(m)}(V)$ into ``connected sub-structures'', which we call \textit{branches}.

\begin{definition}
The \textit{trunk} of a tree in $\T^{(m)}(V)$ is the path from the root to the leftmost leaf.
The nodes on the trunk of the tree break up the tree into sub-trees, which we call \textit{twigs} (see \Cref{3arytree}).
\end{definition}

Let the nodes on the trunk of a tree be $v_1,v_2,\ldots,v_k$, where $v_1$ is the root and $v_{i+1}$ is the leftmost child of $v_i$ for any $i \in [k-1]$.
If $v_i = \operatorname{max}\{v_1,\ldots,v_k\}$, then the first branch of the tree consists of the twigs corresponding to the nodes $v_1,\ldots,v_i$.
If $v_j = \operatorname{max} \{v_{i+1},\ldots,v_k\}$, then the second branch of the tree consists of the twigs corresponding to the nodes $v_{i+1},\ldots,v_j$.
Continuing this way, we break up the tree into branches.

Note that the number of branches of the tree is just the number of right-to-left maxima of the sequence $v_1,v_2,\ldots,v_k$ of nodes on the trunk, \textit{i.e.}, the number of $v_i$ such that $v_i > v_j$ for all $j>i$.
We will call such $v_i$ the \textit{branch nodes} of the trunk.

\begin{example}\label{trunkexamp}
The tree in \Cref{3arytree} has $3$ twigs and $2$ branches.
The first branch consists of just the first twig since $6$ is the largest node in the trunk.
The second branch consists of the second and third twigs since $5$ is larger than $4$.
Here $6$ and $5$ are the branch nodes.
\end{example}

\begin{figure}[!htbp]
    \centering
    \begin{tikzpicture}[rotate=180]
    \draw [dashed,blue,thin] (1.25,0.5) rectangle (5,-5.25);
    \draw [dashed,red,thin] (1.25-0.25,0.5+0.25) rectangle (5+0.25,-5.25-0.25);
    \draw [dashed,blue,thin] (6,-1.5) rectangle (8,-5.25);
    \draw [dashed,blue,thin] (8.5,-3.5) rectangle (11,-6.25);
    \draw [dashed,red,thin] (6-0.25,-1.5+0.25) rectangle (11+0.25,-6.25-0.25);
    \node (1) [draw=black,circle] at (4,0) {$6$};
    \node (2) [draw=black,circle] at (1+1,-2) {$2$};
    \node (3) [draw=black,circle] at (4,-2) {$3$};
    \node (4) [draw=black,circle] at (7,-2) {$4$};
    \node (5) [circle,fill=black,inner sep=2pt] at (0.5+1,-3) {};
    \node (6) [circle,fill=black,inner sep=2pt] at (3.5,-3) {};
    \node (7) [circle,fill=black,inner sep=2pt] at (4,-3) {};
    \node (8) [circle,fill=black,inner sep=2pt] at (4.5,-3) {};
    \node (9) [circle,fill=black,inner sep=2pt] at (6.5,-3) {};
    \node (10) [draw=black,circle] at (10,-4) {$5$};
    \node (11) [draw=black,circle] at (1+1,-4) {$7$};
    \node (12) [draw=black,circle] at (7,-4) {$1$};
    \node (13) [draw=black,circle] at (10-1+0.25,-5) {$8$};
    \node (14) [circle,fill=black,inner sep=2pt] at (11-1,-5) {};
    \node (15) [circle,fill=black,inner sep=2pt] at (0.5+1,-5) {};
    \node (16) [circle,fill=black,inner sep=2pt] at (1+1,-5) {};
    \node (17) [circle,fill=black,inner sep=2pt] at (1.5+1,-5) {};
    \node (18) [circle,fill=black,inner sep=2pt] at (6.5,-5) {};
    \node (19) [circle,fill=black,inner sep=2pt] at (7,-5) {};
    \node (20) [circle,fill=black,inner sep=2pt] at (7.5,-5) {};
    \node (21) [circle,fill=black,inner sep=2pt] at (9-1+0.5+0.25,-6) {};
    \node (22) [circle,fill=black,inner sep=2pt] at (10-1+0.25,-6) {};
    \node (23) [circle,fill=black,inner sep=2pt] at (11-1-0.5+0.25,-6) {};
    \node (24) [circle,fill=black,inner sep=2pt] at (10.5+0.25,-5) {};
    \node (25) [circle,fill=black,inner sep=2pt] at (1.5+1,-3) {};
    \draw (5)--(2)--(1)--(4)--(10)--(24);
    \draw (15)--(11)--(2);
    \draw (16)--(11);
    \draw (17)--(11);
    \draw (6)--(3)--(7);
    \draw (8)--(3);
    \draw (9)--(4)--(12)--(18);
    \draw (19)--(12)--(20);
    \draw (21)--(13)--(10)--(14);
    \draw (22)--(13)--(23);
    \draw (3)--(1);
    \draw (2)--(25);
    \end{tikzpicture}
    \caption{A labeled $3$-ary tree with twigs and branches specified.}
    \label{3arytree}
\end{figure}
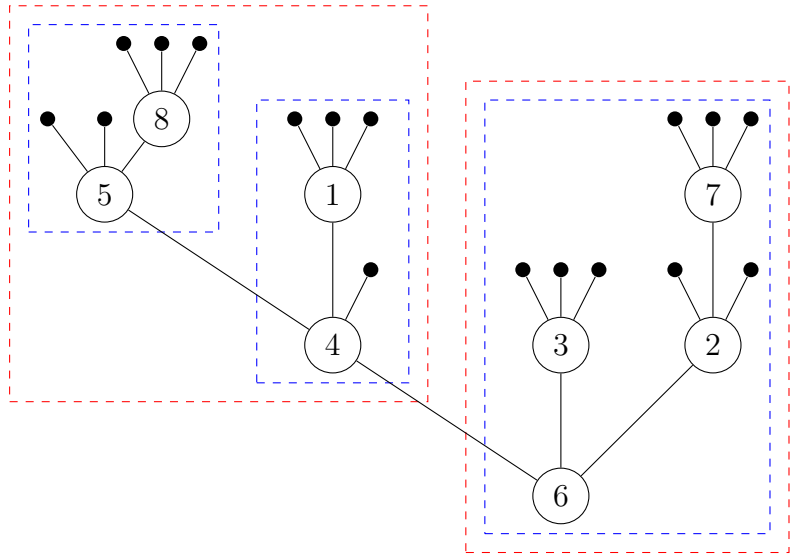

We use the notation $\T_j^{(m)}(V)$ to denote the trees in $\T^{(m)}(V)$ having $j$ branches.
To prove that this is indeed a break-up of trees into connected sub-structures, we have to prove that
\begin{equation}\label{expstructeq}
    |\T_j^{(m)}(V)| = \sum_{\{B_1,\ldots,B_j\} \in \Pi_V} |\T_1^{(m)}(B_1)|\cdots|\T_1^{(m)}(B_j)|.
\end{equation}
Hence, ``connected'' trees are those with exactly one branch, \textit{i.e.}, trees where the last node of the trunk is the one with the largest label.

The connected components associated to a given tree are the branches of the tree.

\begin{example}
The connected components associated to the tree in \Cref{3arytree} are given in \Cref{conncomp}.
\end{example}

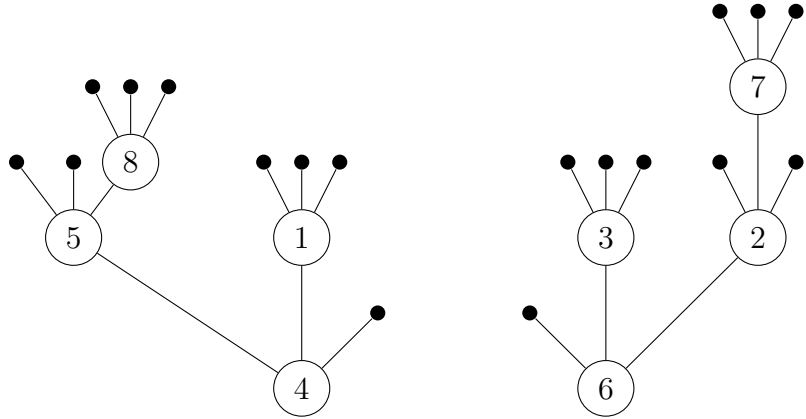
\begin{figure}[!htbp]
    \centering
    \begin{tikzpicture}[rotate=180]
    \node (1) [draw=black,circle] at (4,0) {$6$};
    \node (a) [circle,fill=black,inner sep=2pt] at (5,-1) {};
    \draw (1)--(a);
    \node (2) [draw=black,circle] at (1+1,-2) {$2$};
    \node (3) [draw=black,circle] at (4,-2) {$3$};
    \node (4) [draw=black,circle] at (7+1,-2+2) {$4$};
    \node (5) [circle,fill=black,inner sep=2pt] at (0.5+1,-3) {};
    \node (6) [circle,fill=black,inner sep=2pt] at (3.5,-3) {};
    \node (7) [circle,fill=black,inner sep=2pt] at (4,-3) {};
    \node (8) [circle,fill=black,inner sep=2pt] at (4.5,-3) {};
    \node (9) [circle,fill=black,inner sep=2pt] at (7,-3+2) {};
    \node (10) [draw=black,circle] at (10+1,-4+2) {$5$};
    \node (11) [draw=black,circle] at (1+1,-4) {$7$};
    \node (12) [draw=black,circle] at (7+1,-4+2) {$1$};
    \node (13) [draw=black,circle] at (10-1+0.25+1,-5+2) {$8$};
    \node (14) [circle,fill=black,inner sep=2pt] at (11-1+1,-5+2) {};
    \node (15) [circle,fill=black,inner sep=2pt] at (0.5+1,-5) {};
    \node (16) [circle,fill=black,inner sep=2pt] at (1+1,-5) {};
    \node (17) [circle,fill=black,inner sep=2pt] at (1.5+1,-5) {};
    \node (18) [circle,fill=black,inner sep=2pt] at (6.5+1,-5+2) {};
    \node (19) [circle,fill=black,inner sep=2pt] at (7+1,-5+2) {};
    \node (20) [circle,fill=black,inner sep=2pt] at (7.5+1,-5+2) {};
    \node (21) [circle,fill=black,inner sep=2pt] at (9-1+0.5+0.25+1,-6+2) {};
    \node (22) [circle,fill=black,inner sep=2pt] at (10-1+0.25+1,-6+2) {};
    \node (23) [circle,fill=black,inner sep=2pt] at (11-1-0.5+0.25+1,-6+2) {};
    \node (24) [circle,fill=black,inner sep=2pt] at (10.5+0.25+1,-5+2) {};
    \node (25) [circle,fill=black,inner sep=2pt] at (1.5+1,-3) {};
    \draw (5)--(2)--(1);
    \draw (4)--(10)--(24);
    \draw (15)--(11)--(2);
    \draw (16)--(11);
    \draw (17)--(11);
    \draw (6)--(3)--(7);
    \draw (8)--(3);
    \draw (9)--(4)--(12)--(18);
    \draw (19)--(12)--(20);
    \draw (21)--(13)--(10)--(14);
    \draw (22)--(13)--(23);
    \draw (3)--(1);
    \draw (2)--(25);
    \end{tikzpicture}
    \caption{Connected components of the tree in \Cref{3arytree}.}
    \label{conncomp}
\end{figure}

A collection of connected trees (with disjoint label sets) can be put together in exactly one way to form a tree for which they form the branches.
This is done as follows:
Find the largest label among those on the trunks of the connected trees.
The connected tree $T_1$ with this label is made the first branch of the tree we are building.
Again, find the largest label among those on the trunks of the remaining connected trees.
The connected tree $T_2$ with this label is made the second branch of the tree we are building by gluing it to $T_1$.
This is done by deleting the leftmost leaf of $T_1$ and fixing the root of $T_2$ in its position.
This process is repeated to until all the connected trees are glued together.

\begin{example}
The tree associated to the collection of connected trees in \Cref{collconn} is given in \Cref{built3arytree}.
\end{example}

\begin{figure}[!htbp]
    \centering
    \begin{tikzpicture}[rotate=180]
    \node (1) [draw=black,circle] at (4,0) {$6$};
    \node (a) [circle,fill=black,inner sep=2pt] at (5,-1) {};
    \draw (1)--(a);
    \node (2) [draw=black,circle] at (1+1,-2) {$2$};
    \node (3) [draw=black,circle] at (4,-2) {$3$};
    \node (4) [draw=black,circle] at (7+1,-2+2) {$4$};
    \node (5) [circle,fill=black,inner sep=2pt] at (0.5+1,-3) {};
    \node (b) [circle,fill=black,inner sep=2pt] at (2,-3) {};
    \node (6) [circle,fill=black,inner sep=2pt] at (3.5,-3) {};
    \node (7) [circle,fill=black,inner sep=2pt] at (4,-3) {};
    \node (8) [circle,fill=black,inner sep=2pt] at (4.5,-3) {};
    \node (9) [circle,fill=black,inner sep=2pt] at (7,-3+2) {};
    \node (10) [draw=black,circle] at (10+1-0.75,-4+2) {$5$};
    \node (11) [draw=black,circle] at (1-1,-4+4) {$7$};
    \node (12) [draw=black,circle] at (7+1,-4+2) {$1$};
    \node (13) [draw=black,circle] at (10-1+0.25+1-0.75,-5+2) {$8$};
    \node (14) [circle,fill=black,inner sep=2pt] at (11-1+1-0.75,-5+2) {};
    \node (15) [circle,fill=black,inner sep=2pt] at (0.5-1,-5+4) {};
    \node (16) [circle,fill=black,inner sep=2pt] at (1-1,-5+4) {};
    \node (17) [circle,fill=black,inner sep=2pt] at (1.5-1,-5+4) {};
    \node (18) [circle,fill=black,inner sep=2pt] at (6.5+1,-5+2) {};
    \node (19) [circle,fill=black,inner sep=2pt] at (7+1,-5+2) {};
    \node (20) [circle,fill=black,inner sep=2pt] at (7.5+1,-5+2) {};
    \node (21) [circle,fill=black,inner sep=2pt] at (9-1+0.5+0.25+1-0.75,-6+2) {};
    \node (22) [circle,fill=black,inner sep=2pt] at (10-1+0.25+1-0.75,-6+2) {};
    \node (23) [circle,fill=black,inner sep=2pt] at (11-1-0.5+0.25+1-0.75,-6+2) {};
    \node (24) [circle,fill=black,inner sep=2pt] at (10.5+0.25+1-0.75,-5+2) {};
    \node (25) [circle,fill=black,inner sep=2pt] at (1.5+1,-3) {};
    \draw (5)--(2)--(1);
    \draw (4)--(10)--(24);
    \draw (15)--(11);
    \draw (b)--(2);
    \draw (16)--(11);
    \draw (17)--(11);
    \draw (6)--(3)--(7);
    \draw (8)--(3);
    \draw (9)--(4)--(12)--(18);
    \draw (19)--(12)--(20);
    \draw (21)--(13)--(10)--(14);
    \draw (22)--(13)--(23);
    \draw (3)--(1);
    \draw (2)--(25);
    \end{tikzpicture}
    \caption{A collection of connected trees.}
    \label{collconn}
\end{figure}
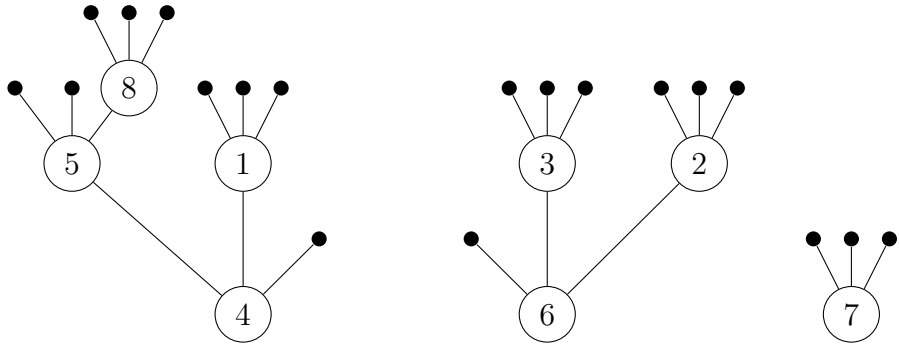

\begin{figure}[!htbp]
    \centering
    \begin{tikzpicture}[scale = 0.85, rotate=180]
    \node (1) [draw=black,circle] at (4,0) {$6$};
    \node (11) [draw=black,circle] at (1-1+3-0.5,0.8) {$7$};
    \draw (1)--(11);
    \node (15) [circle,fill=black,inner sep=2pt] at (0.5-1+3-0.5,0.75-5+4) {};
    \node (16) [circle,fill=black,inner sep=2pt] at (1-1+3-0.5,0.75-5+4) {};
    \draw (15)--(11)--(16);
    \node (2) [draw=black,circle] at (1+1,-2) {$2$};
    \node (b) [circle,fill=black,inner sep=2pt] at (2,-3) {};
    \node (3) [draw=black,circle] at (4,-2) {$3$};
    \node (4) [draw=black,circle] at (7-0.5,-0.5-2+1) {$4$};
    \node (5) [circle,fill=black,inner sep=2pt] at (0.5+1,-3) {};
    \node (6) [circle,fill=black,inner sep=2pt] at (3.5,-3) {};
    \node (7) [circle,fill=black,inner sep=2pt] at (4,-3) {};
    \node (8) [circle,fill=black,inner sep=2pt] at (4.5,-3) {};
    \node (9) [circle,fill=black,inner sep=2pt] at (6.5-0.5,-0.5-3+1) {};
    \node (10) [draw=black,circle] at (10-0.5-0.75,-0.25-4+1) {$5$};
    \node (12) [draw=black,circle] at (7-0.5,-0.5-4+1) {$1$};
    \node (13) [draw=black,circle] at (10-1+0.25-0.5-0.75,-0.5-5+1) {$8$};
    \node (14) [circle,fill=black,inner sep=2pt] at (11-1-0.5-0.75,-0.5-5+1) {};
    \node (18) [circle,fill=black,inner sep=2pt] at (6.5-0.5,-0.5-5+1) {};
    \node (19) [circle,fill=black,inner sep=2pt] at (7-0.5,-0.5-5+1) {};
    \node (20) [circle,fill=black,inner sep=2pt] at (7.5-0.5,-0.5-5+1) {};
    \node (21) [circle,fill=black,inner sep=2pt] at (9-1+0.5+0.25-0.5-0.75,-0.5-6+1) {};
    \node (22) [circle,fill=black,inner sep=2pt] at (10-1+0.25-0.5-0.75,-0.5-6+1) {};
    \node (23) [circle,fill=black,inner sep=2pt] at (11-1-0.5+0.25-0.5-0.75,-0.5-6+1) {};
    \node (24) [circle,fill=black,inner sep=2pt] at (10.5+0.25-0.5-0.75,-0.5-5+1) {};
    \node (25) [circle,fill=black,inner sep=2pt] at (1.5+1,-3) {};
    \draw (5)--(2)--(1)--(4)--(10)--(24);
    \draw (2)--(b);
    \draw (6)--(3)--(7);
    \draw (8)--(3);
    \draw (9)--(4)--(12)--(18);
    \draw (19)--(12)--(20);
    \draw (21)--(13)--(10)--(14);
    \draw (22)--(13)--(23);
    \draw (3)--(1);
    \draw (2)--(25);
    \end{tikzpicture}
    \caption{The tree associated to the collection of connected trees in \Cref{collconn}.}
    \label{built3arytree}
\end{figure}
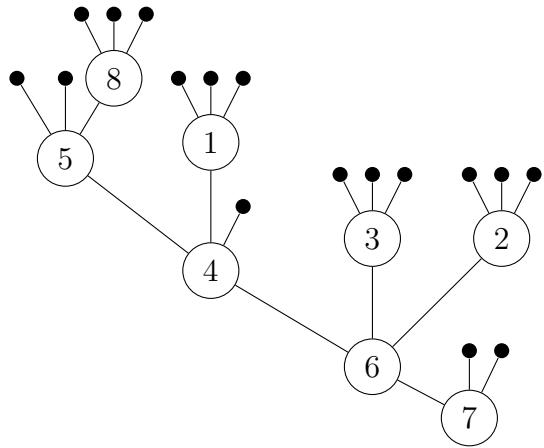

Recall that for a finite set of integers $S$ with $m=\operatorname{max}\{|s| : s \in S\}$, the set $\mathcal{T}_S(V)$, for some label set $V$, is the set of trees in $\mathcal{T}^{(m)}(V)$ such that if $\operatorname{cadet}(u)=v$, then
\begin{itemize}
    \item if $\operatorname{lsib}(v) \notin S \cup \{0\}$, we must have $u<v$, and
    \item if $-\operatorname{lsib}(v) \notin S$, we must have $u>v$.
\end{itemize}
We call this set of conditions ``Condition $S$''.

We set $\T_S:=\bigcup\limits_{V}\T_S(V)$ where the union is over all label sets $V$.
We now show that
\begin{enumerate}
    \item the connected components of any tree in $\T_S$ are also in $\T_S$, and
    \item trees that are built using connected trees in $\T_S$ are also in $\T_S$.
\end{enumerate}

We first note that statement 1 follows since the condition for a tree to be in $\T_S$ is a local condition.
This is also because, if $T'$ is a connected component of the tree $T$, the cadet of any node in $T'$ (if it exists) is the same as its cadet when considered as a node of $T$.

To prove statement 2, we only have to check that Condition $S$ is satisfied for the branch nodes of a tree built using connected trees in $\T_S$.
If a branch node does not have a cadet, Condition $S$ is trivially satisfied.
If a branch node $u$ has a cadet $v$, we consider two cases:
\begin{itemize}
    \item If the cadet is not the first child, then Condition $S$ is satisfied since it is satisfied by the connected components of the tree.
    \item If the cadet is the first child, then we must have $u>v$ since $u$ is a branch node.
    This makes sure that Condition $S$ is satisfied since we have $\operatorname{lsib}(v)=0$ and hence $\operatorname{lsib}(v) \in S \cup \{0\}$.
\end{itemize}

From the preceding, we get an equation analogous to \eqref{expstructeq} for the trees $\T_S$. 
Hence, from the discussion in \Cref{prelim}, we get the following result.

\begin{theorem}\label{statthm}
    For a transitive set of integers $S$, the absolute value of the coefficient of $t^j$ in $\chi_{\A_S(n)}(t)$ is the number of trees in $\T_S(n)$ with $j$ branches.
\end{theorem}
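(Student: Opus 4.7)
The plan is to combine three ingredients: the ESA identity from \Cref{prelim}, which expresses $\sum_{n} \chi_{\A_n}(t)\, x^n/n!$ in terms of the region counts $r(\A_n)$; Bernardi's bijection $|\T_S(n)| = r(\A_S(n))$ quoted at the end of \Cref{prelim}; and the exponential-formula identity from \cite{ec2} recalled there. After the substitutions $t \mapsto -t$, $x \mapsto -x$, the ESA identity takes the exponential-formula shape with $h(n)$ replaced by $|\T_S(n)|$ and $c_j(n)$ replaced by the absolute value of the coefficient of $t^j$ in $\chi_{\A_S(n)}(t)$ (using the form \eqref{charform}, which is non-negative entry by entry). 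Comparing this against the exponential-formula identity then reduces the theorem to exhibiting a canonical decomposition of trees in $\T_S(V)$ into ``connected'' sub-structures indexed by the blocks of a set partition of $V$, in which the number of blocks equals the branch count.

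I would use the branch decomposition already introduced. First, ignoring Condition~$S$, I would establish the identity
$$|\T^{(m)}_j(V)| \;=\; \sum_{\{B_1,\ldots,B_j\} \in \Pi_V} |\T^{(m)}_1(B_1)| \cdots |\T^{(m)}_1(B_j)|,$$
where $\T^{(m)}_j(V)$ denotes trees in $\T^{(m)}(V)$ with exactly $j$ branches and ``connected'' means $j = 1$, i.e.\ the maximum of the trunk labels is attained at the last trunk node. The forward map records the label set of each branch together with the branch itself as a tree on that label set. The inverse sorts the connected pieces in decreasing order of the maximum of their trunk labels and glues them successively by identifying the root of each new piece with the leftmost-leaf slot of the previously assembled tree; a direct check, using that the maximum trunk label of each piece is its terminal trunk node, shows that this reassembly exactly recovers the input collection of pieces.

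It then remains to verify that both maps restrict to bijections on $\T_S$. Splitting is immediate: Condition~$S$ is a local constraint on pairs $(i, \operatorname{cadet}(i))$, and every such pair within a single branch is preserved when the other branches are removed. The subtle direction is gluing. At every joint of the assembled tree, the root $v$ of the newly attached branch becomes the cadet of the last trunk node $u$ of the preceding branch; since $v$ is installed as the leftmost child, $\operatorname{lsib}(v) = 0$. The first clause of Condition~$S$ is then vacuous because $0 \in S \cup \{0\}$. For the second clause, if $0 \notin S$ one needs $u > v$, which holds because branch nodes are by definition right-to-left maxima of the final trunk and $v$ lies strictly later on that trunk than $u$; if $0 \in S$ the clause is also vacuous. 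Hence Condition~$S$ survives gluing.

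The main obstacle I anticipate is the combinatorial bookkeeping around the gluing map — precisely specifying ``identify root with leftmost-leaf slot'' as an operation in $\T^{(m)}(V)$ and checking that the branch decomposition of the resulting tree recovers the input sequence of connected trees in the correct order. Once this is settled, the theorem follows formally from the exponential formula together with the ESA identity.
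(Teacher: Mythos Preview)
Your proposal is correct and follows essentially the same route as the paper: reduce the statement, via the ESA identity and the exponential formula, to showing that the branch decomposition endows $\T_S$ with an exponential structure, and then verify that splitting and gluing both preserve Condition~$S$. One small slip worth fixing: it is not true that the root $v$ of a newly attached branch is always the cadet of the preceding branch node $u$; if $u$ already had a node child $w$ in positions $2,\ldots,m{+}1$, then $w$ (not $v$) remains $\operatorname{cadet}(u)$ after gluing, and in that case Condition~$S$ at $u$ is inherited unchanged from the connected piece --- the paper handles this by an explicit two-case split, while your argument correctly covers the remaining case where $v$ genuinely becomes the cadet.
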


\begin{example}
When $S=\{0\}$, we obtain the braid arrangement.
Here, $\T_{\{0\}}(n)$ corresponds to permutations of $[n]$ and \Cref{statthm} states that the absolute value of the coefficient of $t^j$ in $\chi_{\A_{\{0\}}(n)}(t)$ is the number of permutations of $[n]$ with $j$ right-to-left maxima.
By \cite[Corollary 1.3.11]{ec1}, this agrees with the observation in \Cref{intro} that the coefficients are the Stirling numbers of the first kind.
\end{example}

\begin{example}
The Shi arrangement $\mathcal{S}_n$ in $\R^n$ is the deformation $\A_{\{0,1\}}(n)$.
The trees in $\T_{\{0,1\}}(n)$, called Shi trees, are those labeled binary trees where any right node has a label less than that of its parent.
The Shi trees for $n=3$ are given in \Cref{shi,lin}.
Counting the branches in these trees, we get $\chi_{\mathcal{S}_3}(t)=t^3+6t^2+9t$, which agrees with the known formula for the characteristic polynomial (for example, see \cite[Theorem 3.3]{athanas96}).
\end{example}

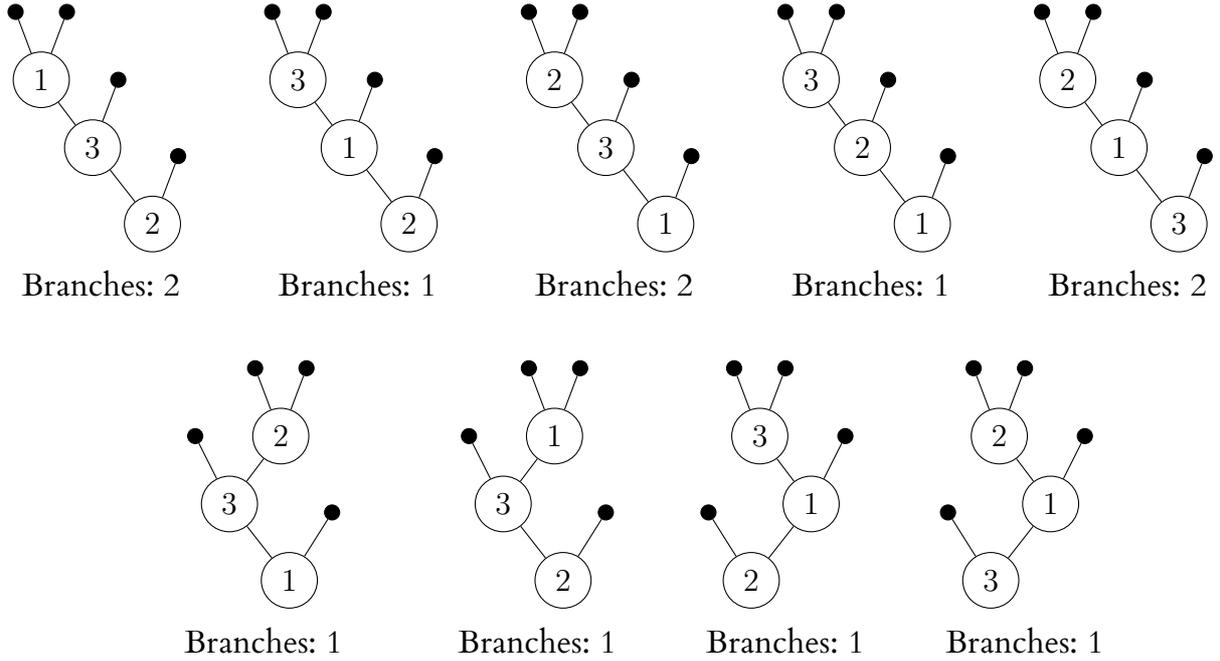
\begin{figure}[!htbp]
    \centering
    \begin{tikzpicture}[scale=0.45]
           \treeb{(0,0)}{$2$}{$3$}{$1$}{2};
           \treeb{(7.5,0)}{$2$}{$1$}{$3$}{1};
           \treeb{(15,0)}{$1$}{$3$}{$2$}{2};
           \treeb{(22.5,0)}{$1$}{$2$}{$3$}{1};
           \treeb{(30,0)}{$3$}{$1$}{$2$}{2};
           
           \treed{(4,-10.5)}{$1$}{$3$}{$2$}{1};
           \treed{(12,-10.5)}{$2$}{$3$}{$1$}{1};
           \treec{(18,-10.5)}{$2$}{$1$}{$3$};
           \treec{(25,-10.5)}{$3$}{$1$}{$2$};
    \end{tikzpicture}
    \caption{Shi trees for $n=3$ that are not Linial.}
    \label{shi}
\end{figure}

\begin{remark}
The Shi trees $\T_{\{0,1\}}(n)$ are in bijection with Cayley trees on $n+1$ vertices. 
Using a decomposition of Cayley trees, one can show that the coefficient of $t^j$ in $\chi_{\mathcal{S}_n}(t)$ is the number of such Cayley trees where the vertex $n+1$ has degree $j$.
\end{remark}

\begin{example}
The Linial arrangement $\mathcal{L}_n$ in $\R^n$ is the deformation $\A_{\{1\}}(n)$.
The trees in $\T_{\{1\}}(n)$, called Linial trees, are those Shi trees that also satisfy the property that any left node whose sibling is a leaf has smaller label than that of its parent.
The Linial trees for $n=3$ are given in \Cref{lin}.
Counting the branches in these trees, we get $\chi_{\mathcal{L}_3}(t)=t^3+3t^2+3t$, which agrees with the known formula for the characteristic polynomial (for example, see \cite[Theorem 4.2]{athanas96}).
\end{example}

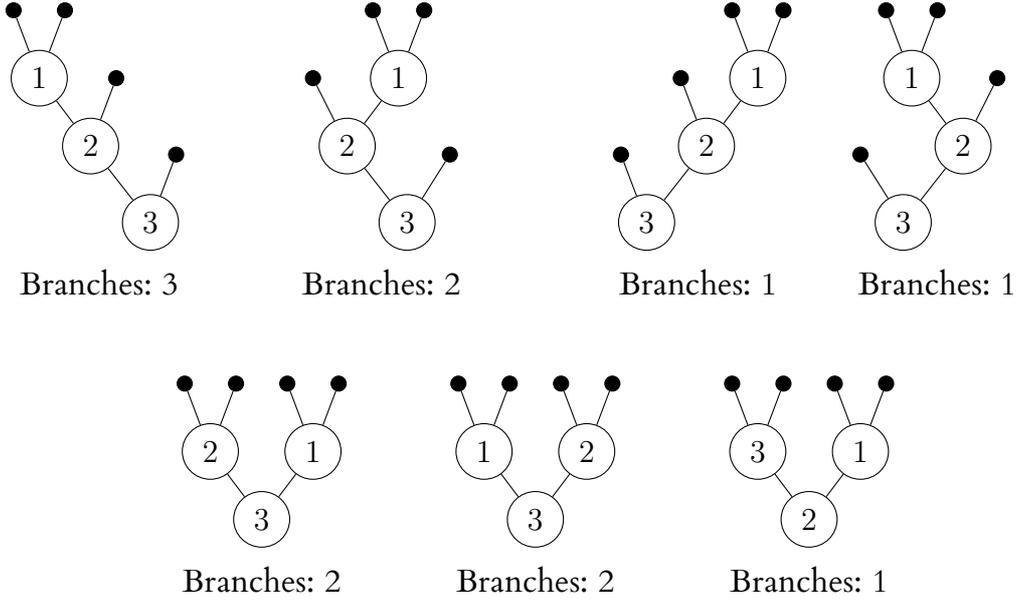
\begin{figure}[!htbp]
    \centering
    \begin{tikzpicture}[scale=0.45]
           \treeb{(0,0)}{$3$}{$2$}{$1$}{3};
           \treed{(7.5,0)}{$3$}{$2$}{$1$}{2};
           \treea{(15,0)}{$3$}{$2$}{$1$};
           \treec{(22.5,0)}{$3$}{$2$}{$1$};
           
           \treee{(2,-11)}{$2$}{$3$}{$1$}{2};
           \treee{(10,-11)}{$1$}{$3$}{$2$}{2};
           \treee{(18,-11)}{$3$}{$2$}{$1$}{1};
    \end{tikzpicture}
    \caption{Linial trees for $n=3$.}
    \label{lin}
\end{figure}

\section{Properties of coefficients}\label{mcat}

In this section, we use the combinatorial interpretation of the coefficients to obtain some of their properties, some of which are not obvious. 
For any transitive set $S$, we use $C(S, n, j)$ to denote the absolute value of the coefficient of $t^j$ in $\chi_{\A_S(n)}(t)$.

\begin{proposition}
For any transitive sets $S' \subseteq S$, we have
\begin{equation*}
    C(S', n, j) \leq C(S, n, j).
\end{equation*}
\end{proposition}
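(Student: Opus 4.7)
The plan is to prove this via an injection between the tree sets, using the combinatorial interpretation given by \Cref{statthm}. Concretely, it suffices to construct, for each $j$, an injection $\phi \colon \{T \in \T_{S'}(n) : T \text{ has } j \text{ branches}\} \hookrightarrow \{T \in \T_S(n) : T \text{ has } j \text{ branches}\}$. Set $m' = \max\{|s| : s \in S'\}$ and $m = \max\{|s| : s \in S\}$; since $S' \subseteq S$, we have $m' \leq m$, but typically the two sets live over trees of different arities.

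First I would dispose of the trivial observation that governs the ``Condition $S$'' comparison. If $\operatorname{lsib}(v) \notin S \cup \{0\}$, then since $S' \cup \{0\} \subseteq S \cup \{0\}$ we also have $\operatorname{lsib}(v) \notin S' \cup \{0\}$; similarly for the negated version. Thus whenever the arities match (i.e.\ $m' = m$), Condition $S'$ is strictly stronger than Condition $S$, so $\T_{S'}(n) \subseteq \T_S(n)$ directly, and the branch statistic is read off the same underlying tree.

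The substantive case is $m' < m$. Define $\phi$ by \emph{right-padding}: given $T \in \T_{S'}(n)$, form $\phi(T)$ by appending $m - m'$ new leaves as the rightmost children of every node of $T$. This produces an $(m+1)$-ary tree on the same label set. I would then verify three things. (i) For any node $u$ of $T$, its cadet in $\phi(T)$ equals its cadet $v$ in $T$ (the new children are leaves and sit to the right of every existing child), and $\operatorname{lsib}_{\phi(T)}(v) = \operatorname{lsib}_T(v)$. Combined with the containment observation above, this shows $\phi(T) \in \T_S(n)$. (ii) The trunk of $\phi(T)$ consists of the same sequence of labeled nodes as the trunk of $T$, because the leftmost child of each node is unchanged by right-padding; hence the right-to-left maxima, i.e.\ the branch nodes, are identical, so the number of branches is preserved. (iii) The map $\phi$ is injective, since $T$ is recovered from $\phi(T)$ by deleting the $m - m'$ rightmost children (all leaves) of each node.

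The only subtle point, and the thing that could trip up a careless reader, is step (i): one must make sure the padded leaves go on the \emph{right}, not the left, so that the leftmost-leaf path (the trunk) and every $\operatorname{lsib}$ value involved in Condition $S$ are preserved. Padding on the left would collapse the trunk and destroy the branch count, so the asymmetry of the definition is essential. Once the correct side is chosen, everything else is a direct check, and combining $\phi$ with \Cref{statthm} yields $C(S', n, j) \leq C(S, n, j)$.
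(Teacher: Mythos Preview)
Your proposal is correct and follows essentially the same approach as the paper: both argue via the inclusion $\T_{S'} \hookrightarrow \T_S$ obtained by adding leaves to each node when $m' < m$, and observe that this preserves the branch count. You are more explicit than the paper---in particular you specify that the padding must go on the \emph{right} and verify that cadet, $\operatorname{lsib}$, and the trunk are unchanged---details the paper leaves implicit by citing Bernardi's remark.
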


\begin{proof}
As stated in \cite[Remark 3.10]{ber}, we have $\T_S' \subseteq \T_S$. 
When $\operatorname{max}\{|s'| \mid s' \in S'\} < \operatorname{max}\{|s| \mid s \in S\}$, this inclusion can be obtained via adding the required number of leaves to each node of the trees in $\T_S'$. 
This inclusion preserves the number of branches and hence gives us the required result.
\end{proof}

\begin{proposition}
For any transitive set $S$, we have
\begin{equation*}
    C(S, n, j) \leq C(S, n + 1, j + 1).
\end{equation*}
\end{proposition}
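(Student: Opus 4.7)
The plan is to exhibit an injection from trees in $\T_S(n)$ with $j$ branches to trees in $\T_S(n+1)$ with $j+1$ branches; combined with \Cref{statthm} this yields the desired inequality.

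Given $T \in \T_S(n)$, I would build $\phi(T)$ by introducing a fresh node labeled $n+1$, declaring it the new root, placing the old root of $T$ as its leftmost child, and filling its remaining $m$ child slots with leaves. Intuitively, this prepends $n+1$ to the trunk, promoting $n+1$ itself to a new branch node without disturbing any of the old branch nodes.

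The first verification is that $\phi(T) \in \T_S(n+1)$, i.e., Condition $S$ holds. Every pre-existing cadet relationship is inherited unchanged from $T$, so nothing new can go wrong below the old root. The only new cadet relationship is at $n+1$: because the other $m$ children of $n+1$ are leaves, $\operatorname{cadet}(n+1)$ is the old root $r$, and $\operatorname{lsib}(r) = 0$. The first bullet of Condition $S$ is vacuous since $0 \in S \cup \{0\}$, while the second bullet is relevant only when $0 \notin S$, in which case it demands $n+1 > r$, which is automatic.

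Next I would count branches. Writing $v_1, \dots, v_k$ for the trunk of $T$, the trunk of $\phi(T)$ is the sequence $n+1, v_1, \dots, v_k$. Prepending the maximal label $n+1$ preserves every old right-to-left maximum of $v_1, \dots, v_k$ and introduces $n+1$ as a new one, so the branch count rises by exactly one. Injectivity of $\phi$ is immediate: one recovers $T$ as the subtree rooted at the leftmost child of $\phi(T)$. The main potential obstacle is confirming Condition $S$ at the new root, but as outlined this reduces to the trivial observation that $\operatorname{lsib}(r) = 0$ makes the nontrivial constraint simply $n+1 > r$.
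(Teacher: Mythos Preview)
Your proposal is correct and matches the paper's approach exactly: the paper also constructs the new tree by making $n+1$ the root with $T$ as its leftmost child and leaves elsewhere. Your write-up simply spells out the Condition~$S$ check, the branch-count increment, and injectivity that the paper leaves implicit in its one-line proof.
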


\begin{proof}
Given a tree $T \in \T_S(n)$, we construct one in $\T_S(n + 1)$ that has root $n + 1$ with $T$ attached as the leftmost child and all other children being leaves. 
This proves the required result.
\end{proof}

\begin{proposition}
For any transitive set $S$, we have
\begin{equation*}
    C(S, n, j) \leq C(S, n + 1, j).
\end{equation*}
\end{proposition}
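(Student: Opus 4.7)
The plan is to construct an injection from $\{T \in \T_S(n) : T \text{ has } j \text{ branches}\}$ into $\{T' \in \T_S(n+1) : T' \text{ has } j \text{ branches}\}$ using two complementary constructions, at least one of which applies to any transitive $S$.

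\emph{Construction A} (applicable when $0 \in S$): Given $T$ with trunk $v_1, \ldots, v_k$ and first branch node $v_{i_1}$ (the trunk maximum), insert a new node labeled $n + 1$ as the new leftmost child of $v_{i_1}$, making the previous leftmost child of $v_{i_1}$ (namely $v_{i_1 + 1}$ when $i_1 < k$, or a leaf when $i_1 = k$) become the leftmost child of this new node; the remaining children of $n+1$ are leaves. The new trunk inserts $n + 1$ right after $v_{i_1}$, so $n + 1$ assumes $v_{i_1}$'s role as a right-to-left maximum while all later branch nodes are unchanged, giving exactly $j$ branches. Condition $S$ at $n + 1$ holds because its cadet has $\operatorname{lsib} = 0$ and $n + 1$ exceeds every other label; Condition $S$ at $v_{i_1}$ is automatic if its cadet is unchanged, and in the delicate case where its cadet becomes $n + 1$ at $\operatorname{lsib} = 0$, the required constraint is vacuous since $0 \in S$.

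\emph{Construction B} (applicable when $m \in S$): Shift every label of $T$ upward by $1$ so that labels become $\{2, \ldots, n+1\}$, then replace the rightmost-most leaf (reached from the root by always descending to the rightmost child) with a new node labeled $1$ having leaf children. Since this leaf is not on the main trunk (which descends via leftmost children), the trunk sequence and branch count are preserved. Condition $S$ at the parent $u$ of the new node $1$ follows from $m \in S$ (making the $\operatorname{lsib} = m$ condition vacuous) together with $u \geq 2 > 1$.

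\emph{Dichotomy and injectivity.} For any transitive $S$ with $m \geq 1$, either $m \in S$ directly, or the maximum is realized only by $-m \in S$, in which case the transitivity condition applied to pairs $s > 0$, $t \leq 0$ with $s, t \notin S$ forces $0 \in S$ (otherwise one deduces $m \in S$, contradicting $m \notin S$). Hence Construction A or B always applies. Each construction is injective because the inserted node is canonically identifiable (as the leftmost child of the first branch node of $T'$, or as the unique node labeled $1$), so $T$ is recovered by removing it and, if needed, shifting labels back down. The main obstacle I foresee is the careful verification of Condition $S$ at the parent of the inserted node in each construction together with making the dichotomy $0 \in S$ or $m \in S$ rigorous from the definition of transitivity.
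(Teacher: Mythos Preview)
Your approach is sound and, notably, more careful than the paper's one-line argument. The paper writes: ``increase every label by $1$ and replace the leftmost leaf by a node labeled $1$ with all leaf children.'' But the leftmost leaf is exactly the end of the trunk, so this appends $1$ to the trunk sequence $v_1{+}1,\ldots,v_k{+}1$. Since $1$ is automatically a right-to-left maximum (it is last) and every previous right-to-left maximum survives (each shifted label exceeds $1$), the resulting tree has $j{+}1$ branches, not $j$. In other words, the paper's construction as written re-proves the preceding inequality $C(S,n,j)\le C(S,n+1,j+1)$ rather than the stated one.

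Your two-case construction genuinely addresses this. Construction~A inserts $n{+}1$ on the trunk immediately after the current trunk maximum $v_{i_1}$, so $n{+}1$ takes over as first branch node while $v_{i_1}$ loses that status and all later branch nodes are unaffected --- net change zero. Construction~B stays off the trunk altogether by inserting at the end of the rightmost path, which is disjoint from the leftmost path once $m\ge 1$. Your dichotomy ``$0\in S$ or $m\in S$'' is correct: if both fail then necessarily $-m\in S$, and transitivity applied to $s=m>0$, $t=0\le 0$ (both outside $S$) forces $t-s=-m\notin S$, a contradiction.

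Two small fixes. In the injectivity sentence for Construction~A you describe the inserted node as ``the leftmost child of the first branch node of $T'$''; in fact $n{+}1$ \emph{is} the first branch node of $T'$, so drop ``the leftmost child of'' (or simply say ``the node labeled $n{+}1$''). And when $m=0$ your Construction~B degenerates (the rightmost and leftmost leaves coincide), but then $S=\{0\}$ so $0\in S$ and Construction~A applies; just make this edge case explicit.
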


\begin{proof}
The result follows by increasing the label of each node and replacing the leftmost leaf with a node labeled $1$ with all leaf children.
\end{proof}

We can derive some more properties for a particular class transitive sets. 
This follows by using a different break-up of trees into connected components from the one presented in \Cref{bstat}.

\begin{proposition}\label{moreconnec}
If $S$ is a transitive set such that $0 \in S$ and there exists some $k \geq 1$ such that $k, -k \in S$, we have
\begin{equation*}
    C(S,n,1) \geq \sum_{j=2}^{n}C(S,n,j).
\end{equation*}
\end{proposition}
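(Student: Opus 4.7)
Plan: The proof exploits the hypothesis $0, k, -k \in S$, which provides \emph{two} distinguished child positions at which the cadet condition is vacuous: position $0$ (the leftmost child, free because $0 \in S$) and position $k$ (the $(k+1)$-th child, free because both $k \in S$ and $-k \in S$). This allows a second valid exponential decomposition of $\mathcal{T}_S$ in addition to the one in \Cref{bstat}. Whereas the standard decomposition uses the trunk (the path from the root always taking the leftmost child) and glues components at the leftmost-leaf position, the \emph{alt-decomposition} uses the analogous ``alt-trunk'' obtained by descending from the root along the $(k+1)$-th child at each step, and glues components at the corresponding leaf. The verification that this is an exponential structure --- that the alt-branches (obtained by splitting at the right-to-left maxima of the alt-trunk) are closed under decomposition and gluing while preserving \Cref{Streesdef} --- is formally identical to the verification in \Cref{bstat}, with $k$ playing the role of $0$.

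Applying the exponential formula to the alt-decomposition, the EGF of alt-connected trees equals $\log\!\left(\sum_n |\mathcal{T}_S(n)|\,x^n/n!\right)$, which also equals the EGF of std-connected trees. Hence the number of alt-connected trees on $n$ nodes is again $C(S,n,1)$. The desired inequality
\[
C(S,n,1) \;\geq\; \sum_{j\geq 2} C(S,n,j)
\]
is equivalent to $|\mathcal{T}_S(n)| \leq 2\,C(S,n,1)$, and I would finish by constructing an injection $\phi : \mathcal{T}_S(n) \setminus \mathcal{T}_{S,1}(n) \to \mathcal{T}_{S,1}(n)$ using the free position $k$. Given $T$ with first branch node $v_{i_1}$ (the trunk node carrying the maximum trunk label), $\phi$ relocates the tail of the trunk by swapping the subtree at position $0$ of $v_{i_1}$ with the subtree at position $k$ of $v_{i_1}$. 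Since position $k$ is cadet-free in $\mathcal{T}_S$, the swap preserves \Cref{Streesdef}; and if the subtree originally sitting at position $k$ was a leaf, the resulting tree has trunk $v_1,\dots,v_{i_1}$ whose maximum $v_{i_1}$ is now the last trunk node, so it is std-$1$-branch.

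The principal obstacle is the case analysis and injectivity. When the subtree at position $k$ of $v_{i_1}$ is not a leaf, the swap pushes the trunk past $v_{i_1}$ into that subtree and the maximum need not end up last; one must then iterate the move, or route it through the alt-decomposition, to produce a bona fide element of $\mathcal{T}_{S,1}(n)$. Injectivity must be established by exhibiting the inverse operation, recovering $T$ from $\phi(T)$ using the position of the largest trunk label and the record of which subtree was moved. The fact that the alt-decomposition has the same connected count $C(S,n,1)$ is what makes the target set large enough to receive such an injection.
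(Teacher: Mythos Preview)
Your alt-decomposition contains a genuine gap. The assertion that the verification ``is formally identical to the verification in \Cref{bstat}, with $k$ playing the role of $0$'' overlooks an asymmetry: since the cadet is the \emph{rightmost} node child, severing the leftmost child of a node can only delete a cadet relation, never create one. Severing the child at position $k$, however, can expose a new cadet at some position $j$ with $0<j<k$, and Condition~$S$ for that pair need not have held in the original tree. A concrete witness is $S=\{-3,-2,-1,0,1,3\}$, which one checks is transitive with $m=3$ and $0,3,-3\in S$ (so $k=3$). If an alt-trunk node $u$ has children (leaf, leaf, a node $v$ with $v<u$, the next alt-trunk node) at positions $0,1,2,3$, then in $T$ the cadet of $u$ lies at position $3$ and both clauses of Condition~$S$ are vacuous; but in the extracted alt-branch the cadet of $u$ becomes $v$ at position $2$, and since $2\notin S$ the first clause forces $u<v$, which fails. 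Hence alt-branches of a tree in $\mathcal{T}_S$ need not lie in $\mathcal{T}_S$, the exponential formula does not apply to the alt-decomposition, and you cannot conclude that alt-connected trees number $C(S,n,1)$. Since your admittedly incomplete swap injection leans on precisely this count as its fallback, the whole argument collapses.

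The paper's proof also introduces a second decomposition, but it keeps the trunk at position $0$ and changes only the splitting rule: one cuts after the twig containing the maximum of \emph{all} remaining labels, not just the trunk labels. Because $0\in S$, gluing at position $0$ imposes no order constraint whatsoever (both clauses of Condition~$S$ are vacuous when $\operatorname{lsib}=0$), so this ``compartment'' decomposition is a valid exponential structure on $\mathcal{T}_S$ and again realises the coefficients $C(S,n,j)$. The inequality is then obtained by an explicit injection from compartment-disconnected trees (those in which the label $n$ is not in the last twig) to compartment-connected ones: one detaches the sub-trunk below a specific trunk node and reattaches it at the leftmost leaf of the subtree hanging at position $k$ of the preceding trunk node, handling separately the case where $n$ lies in the first twig. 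The hypothesis $k,-k\in S$ is used only to guarantee that reattachment at position $k$ preserves Condition~$S$, and the inverse is read off by locating, on the root-to-$n$ path in the image tree, the node of drift $k$ farthest from the root.
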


\begin{proof}
We have to show that there are more connected trees in $\T_S(n)$ than disconnected trees. 
To prove this, we use \textit{all} the nodes, not just the ones on the trunk, to break-up the tree into \textit{compartments}.

Let the nodes on the trunk of a tree be $v_1,v_2,\ldots,v_k$, where $v_1$ is the root and $v_{i+1}$ is the leftmost child of $v_i$ for any $i \in [k-1]$. 
If the twig corresponding to $v_i$ is the one that contains the label $n$, then the first compartment of the tree consists of the twigs corresponding to the nodes $v_1,\ldots,v_i$. 
If the twig corresponding to $v_j$ is the one that has the node with maximum label among the twigs corresponding to $v_{i+1},\ldots,v_k$, then the second compartment of the tree consists of the twigs corresponding to the nodes $v_{i+1},\ldots,v_j$. 
Continuing this way, we break up the tree into compartments.
For example, the tree in \Cref{binarytree} has $2$ compartments and that in \Cref{built3arytree} has $1$ compartment.

It can be checked that this is a valid break-up of trees into connected components since there is no condition relating a node to its leftmost child since $0 \in S$. 
Hence, $C(S,n,j)$ is the number of trees in $\T_S(n)$ with $j$ compartments. 
In this situation, disconnected trees are those trees where $n$ is not in the last twig of the tree and connected trees are those where it is. 
To prove the result, we have to show that the number of disconnected trees is less than the number of connected trees.

Let $T$ be a disconnected tree. 
First let us suppose that the label $n$ is not in the first twig of $T$. 
Let $i$ be the node on the trunk whose corresponding twig has the node $n$ and $j$ be the parent of $i$. 
Fix some $k \geq 1$ such that $k, -k \in S$ which exists by our hypothesis. 
Let $T_j$ be the subtree of $T$ whose root is the $(k + 1)^{th}$ child of $j$. 
We construct a connected tree $T'$ by breaking off the subtree of $T$ whose root is $i$ and attaching it to $T_j$ by replacing the leftmost leaf of $T_j$ by the node $i$. 
It can be checked that $k, -k \in S$ implies that $T'$ will indeed be in $\T_S(n)$.

\begin{example}
If $T$ is the disconnected tree in \Cref{nnotfirstwigd} and $k = 1$, then $T'$ is the connected tree in \Cref{nnotfirstwigc}.
\end{example}

\begin{figure}[!htbp]
    \centering
    \begin{tikzpicture}[scale=0.8,rotate=180]
    \node (1) [draw=black,circle] at (4,0) {$6$};
    \node (11) [draw=black,circle] at (1-1+3-0.5,0.8) {$7$};
    \draw (1)--(11);
    \node (15) [circle,fill=black,inner sep=2pt] at (0.5-1+3-0.5,0.75-5+4) {};
    \node (16) [circle,fill=black,inner sep=2pt] at (1-1+3-0.5,0.75-5+4) {};
    \draw (15)--(11)--(16);
    \node (2) [draw=black,circle] at (1+1,-2) {$1$};
    \node (b) [circle,fill=black,inner sep=2pt] at (2,-3) {};
    \node (3) [draw=black,circle] at (4,-2) {$3$};
    \node (4) [draw=black,circle] at (7-0.5,-0.5-2+1) {$4$};
    \node (5) [circle,fill=black,inner sep=2pt] at (0.5+1,-3) {};
    \node (6) [circle,fill=black,inner sep=2pt] at (3.5,-3) {};
    \node (7) [circle,fill=black,inner sep=2pt] at (4,-3) {};
    \node (8) [circle,fill=black,inner sep=2pt] at (4.5,-3) {};
    \node (9) [circle,fill=black,inner sep=2pt] at (6.5-0.5,-0.5-3+1) {};
    \node (10) [draw=black,circle] at (10-0.5-0.75,-0.25-4+1) {$5$};
    \node (12) [draw=black,circle] at (7-0.5,-0.5-4+1) {$8$};
    \node (13) [draw=black,circle] at (10-1+0.25-0.5-0.75,-0.5-5+1) {$2$};
    \node (14) [circle,fill=black,inner sep=2pt] at (11-1-0.5-0.75,-0.5-5+1) {};
    \node (18) [circle,fill=black,inner sep=2pt] at (6.5-0.5,-0.5-5+1) {};
    \node (19) [circle,fill=black,inner sep=2pt] at (7-0.5,-0.5-5+1) {};
    \node (20) [circle,fill=black,inner sep=2pt] at (7.5-0.5,-0.5-5+1) {};
    \node (21) [circle,fill=black,inner sep=2pt] at (9-1+0.5+0.25-0.5-0.75,-0.5-6+1) {};
    \node (22) [circle,fill=black,inner sep=2pt] at (10-1+0.25-0.5-0.75,-0.5-6+1) {};
    \node (23) [circle,fill=black,inner sep=2pt] at (11-1-0.5+0.25-0.5-0.75,-0.5-6+1) {};
    \node (24) [circle,fill=black,inner sep=2pt] at (10.5+0.25-0.5-0.75,-0.5-5+1) {};
    \node (25) [circle,fill=black,inner sep=2pt] at (1.5+1,-3) {};
    \draw (5)--(2)--(1)--(4)--(10)--(24);
    \draw (2)--(b);
    \draw (6)--(3)--(7);
    \draw (8)--(3);
    \draw (9)--(4)--(12)--(18);
    \draw (19)--(12)--(20);
    \draw (21)--(13)--(10)--(14);
    \draw (22)--(13)--(23);
    \draw (3)--(1);
    \draw (2)--(25);
    \end{tikzpicture}
    \caption{Disconnected tree with largest label not in first twig.}
    \label{nnotfirstwigd}
\end{figure}
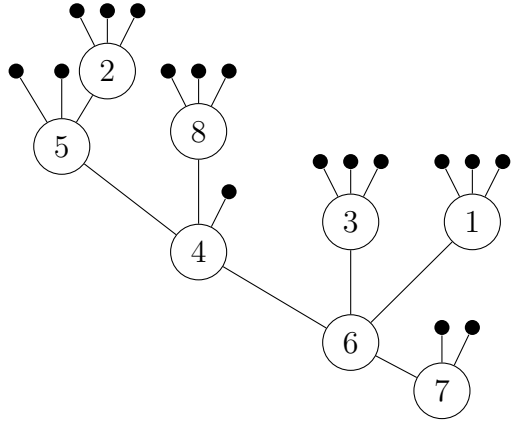
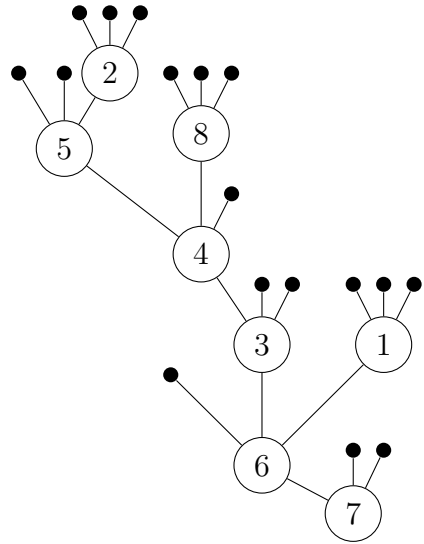
\begin{figure}[!htbp]
    \centering
    \begin{tikzpicture}[scale=0.8,rotate=180]
    \node (1) [draw=black,circle] at (4,0) {$6$};
    \node (11) [draw=black,circle] at (1-1+3-0.5,0.8) {$7$};
    \draw (1)--(11);
    \node (15) [circle,fill=black,inner sep=2pt] at (0.5-1+3-0.5,0.75-5+4) {};
    \node (16) [circle,fill=black,inner sep=2pt] at (1-1+3-0.5,0.75-5+4) {};
    \draw (15)--(11)--(16);
    \node (2) [draw=black,circle] at (1+1,-2) {$1$};
    \node (b) [circle,fill=black,inner sep=2pt] at (2,-3) {};
    \node (3) [draw=black,circle] at (4,-2) {$3$};
    \node (5) [circle,fill=black,inner sep=2pt] at (0.5+1,-3) {};
    \node (6) [circle,fill=black,inner sep=2pt] at (3.5,-3) {};
    \node (7) [circle,fill=black,inner sep=2pt] at (4,-3) {};
    \node (8) [circle,fill=black,inner sep=2pt] at (5.5,-1.5) {};
    \node (25) [circle,fill=black,inner sep=2pt] at (1.5+1,-3) {};
    
    \node (4) [draw=black,circle] at (7-0.5-2+0.5,-0.5-1.5-0.5-2+1) {$4$};
    \node (9) [circle,fill=black,inner sep=2pt] at (6.5-0.5-2+0.5,-0.5-1.5-0.5-3+1) {};
    \node (10) [draw=black,circle] at (10-0.5-0.75-2+0.5,-0.5-1.5-0.25-4+1) {$5$};
    \node (12) [draw=black,circle] at (7-0.5-2+0.5,-0.5-1.5-0.5-4+1) {$8$};
    \node (13) [draw=black,circle] at (10-1+0.25-0.5-0.75-2+0.5,-0.5-1.5-0.5-5+1) {$2$};
    \node (14) [circle,fill=black,inner sep=2pt] at (11-1-0.5-0.75-2+0.5,-0.5-1.5-0.5-5+1) {};
    \node (18) [circle,fill=black,inner sep=2pt] at (6.5-0.5-2+0.5,-0.5-1.5-0.5-5+1) {};
    \node (19) [circle,fill=black,inner sep=2pt] at (7-0.5-2+0.5,-0.5-1.5-0.5-5+1) {};
    \node (20) [circle,fill=black,inner sep=2pt] at (7.5-0.5-2+0.5,-0.5-1.5-0.5-5+1) {};
    \node (21) [circle,fill=black,inner sep=2pt] at (9-1+0.5+0.25-0.5-0.75-2+0.5,-0.5-1.5-0.5-6+1) {};
    \node (22) [circle,fill=black,inner sep=2pt] at (10-1+0.25-0.5-0.75-2+0.5,-0.5-1.5-0.5-6+1) {};
    \node (23) [circle,fill=black,inner sep=2pt] at (11-1-0.5+0.25-0.5-0.75-2+0.5,-0.5-1.5-0.5-6+1) {};
    \node (24) [circle,fill=black,inner sep=2pt] at (10.5+0.25-0.5-0.75-2+0.5,-0.5-1.5-0.5-5+1) {};
    
    \draw (5)--(2)--(1);
    \draw (4)--(10)--(24);
    \draw (2)--(b);
    \draw (6)--(3)--(7);
    \draw (4)--(3);
    \draw (1)--(8);
    \draw (9)--(4)--(12)--(18);
    \draw (19)--(12)--(20);
    \draw (21)--(13)--(10)--(14);
    \draw (22)--(13)--(23);
    \draw (3)--(1);
    \draw (2)--(25);
    \end{tikzpicture}
    \caption{Connected tree associated to the tree in \Cref{nnotfirstwigd}.}
    \label{nnotfirstwigc}
\end{figure}

To obtain $T$ back from $T'$, we first need the following definition from \cite{ber}.
We define $\operatorname{drift}(u)=0$ when $u$ is the root of a tree and for any other vertex $v$ with parent $w$, define $\operatorname{drift}(v)=\operatorname{lsib}(v)+\operatorname{drift}(w)$.

To obtain $T$ back from $T'$, we note that $i$ is the vertex with drift $k$ that is furthest from the root in the unique path from the root to the node $n$. 
Now $T$ is obtained by breaking off the subtree of $T'$ with root $i$ and replacing the leftmost leaf of $T'$ with this subtree.

Next, suppose that the root of $T$ is $i$ and the label $n$ is in the first twig.
Let $j$ be the last node on the trunk, which is necessarily different from $i$ since $T$ is disconnected.
Just as before, let $T_j$ be the subtree of $T$ whose root is the $(k + 1)^{th}$ of $j$ and construct $T'$ by removing the first twig of $T$ and using it to replace the leftmost leaf of $T_j$.

\begin{example}
If $T$ is the disconnected tree on the left in \Cref{nfirstwig} and $k=1$, then $T'$ is the connected tree on the right.
\end{example}

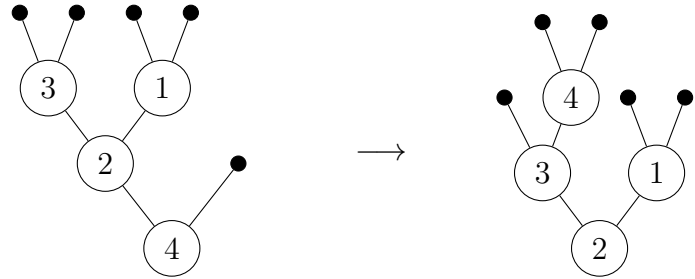
\begin{figure}[!htbp]
\centering
    \begin{tikzpicture}[xscale=-1,scale=0.5]
        \node [circle,draw=black](1) at (-0.25,-0.25) {$4$};
        \node [circle,draw=black,inner sep=2pt,fill=black](2) at (-2,2) {};
        \node [circle,draw=black](3) at (1.5,2) {$2$};
        \node [circle,draw=black](4) at (3,4) {$3$};
        \node [circle,draw=black,inner sep=2pt,fill=black] (4') at (3-0.75,6) {};
        \node [circle,draw=black,inner sep=2pt,fill=black] (4'') at (3+0.75,6) {};
        \node [circle,draw=black](5) at (0,4) {$1$};
        \node [circle,draw=black,inner sep=2pt,fill=black](6) at (0.75,6) {};
        \node [circle,draw=black,inner sep=2pt,fill=black](7) at (-0.75,6) {};
        \draw (1)--(2);
        \draw (1)--(3);
        \draw (3)--(4);
        \draw (3)--(5);
        \draw (5)--(6);
        \draw (5)--(7);
        \draw (4')--(4)--(4'');
        
        \node at (-5.75,2.25) {$\longrightarrow$};
        
        \node [circle,draw=black](3a) at (1.5-13,-2.25+2) {$2$};
        \node [circle,draw=black](4a) at (3-13,-2.25+4) {$3$};
        \node [circle,draw=black] (4'a) at (3-0.75-13,-2.25+6) {$4$};
        \node [circle,draw=black,inner sep=2pt,fill=black] (n) at (3-0.75-0.75-13,-2.25+8) {};
        \node [circle,draw=black,inner sep=2pt,fill=black] (m) at (3-0.75+0.75-13,-2.25+8) {};
        \node [circle,draw=black,inner sep=2pt,fill=black] (4''a) at (3+1-13,-2.25+6) {};
        \node [circle,draw=black](5a) at (0-13,-2.25+4) {$1$};
        \node [circle,draw=black,inner sep=2pt,fill=black](6a) at (0.75-13,-2.25+6) {};
        \node [circle,draw=black,inner sep=2pt,fill=black](7a) at (-0.75-13,-2.25+6) {};
        \draw (3a)--(4a);
        \draw (3a)--(5a);
        \draw (5a)--(6a);
        \draw (5a)--(7a);
        \draw (n)--(4'a)--(m);
        \draw (4'a)--(4a)--(4''a);
    \end{tikzpicture}
    \caption{Disconnected tree with largest label in the first twig and associated connected tree.}
    \label{nfirstwig}
\end{figure}

We can determine the root $i$ of $T$ from $T'$ just as in the previous case, \textit{i.e.}, $i$ is the vertex with drift $k$ that is furthest from the root in the unique path from the root to the node $n$.
Now $T$ is obtained from $T'$ by breaking off the subtree with root $i$ and making it the first twig.

Note that this is an injection from the set of disconnected trees to the set of connected trees.
This is because we can obtain $T$ from $T'$ in each case above and there is no overlap of the connected trees in the two cases.
This is because, in the first case, the leftmost child of the node $i$ is always a node, whereas it is always a leaf in the second case.
This proves the required result.
\end{proof}

\begin{proposition}\label{coeffinc}
If $S$ is a transitive set such that $0 \in S$ and there exists some $k \geq 1$ such that $k, -k \in S$, we have
\begin{equation*}
    C(S,n,j) \geq C(S,n,j+1).
\end{equation*}
\end{proposition}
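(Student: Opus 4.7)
\textbf{Proof plan for \Cref{coeffinc}.} The plan is to localize the surgery from \Cref{moreconnec}'s proof so that it merges only the final two compartments of a tree, rather than collapsing every compartment at once.

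Because $0 \in S$, the \emph{compartment} decomposition introduced in \Cref{moreconnec} is valid, and $C(S,n,j)$ equals the number of trees in $\T_S(n)$ with exactly $j$ compartments. It therefore suffices to exhibit an injection $\Phi$ from the set of trees with $j+1$ compartments to the set of trees with $j$ compartments. Given $T\in\T_S(n)$ with compartments $C_1,\ldots,C_{j+1}$, let $w$ be the root of the compartment $C_j$: when $j=1$, $w$ is the root of $T$, and when $j\geq 2$, $w$ is the leftmost child of the last trunk node of $C_{j-1}$. Let $R$ be the subtree of $T$ rooted at $w$; its label set $V$ is the union of the labels in $C_j$ and $C_{j+1}$. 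Then $R$, viewed as a tree in $\T_S(V)$, has exactly two compartments, namely $C_j$ and $C_{j+1}$, because the maximum label of $R$ equals the maximum of $C_j$ and lies in the last twig of $C_j$.

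Apply the injection from \Cref{moreconnec} to the disconnected tree $R$ to obtain a connected tree $R'\in\T_S(V)$, and define $\Phi(T)=T'$ by substituting $R'$ in place of $R$ inside $T$ (i.e.\ detach $R$ and attach $R'$ at the same position, as the leftmost child of $w$'s parent when $j\geq 2$, or as the whole tree when $j=1$). Three things must be verified: first, that $T'\in\T_S(n)$; second, that $T'$ has exactly $j$ compartments; and third, that $\Phi$ is injective. For the first point, the only potentially new instance of Condition $S$ is at the parent of the new root of $R'$, and since that root occupies the leftmost child slot with $\operatorname{lsib}=0\in S\cup\{0\}$, Condition $S$ is automatic there. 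For the second, $C_1,\ldots,C_{j-1}$ are untouched and $R'$ is a single connected compartment whose maximum label equals that of $C_j$, which is strictly less than the maximum of $C_{j-1}$; so the compartments of $T'$ are $C_1,\ldots,C_{j-1},R'$, totalling $j$. For the third, the last compartment of $T'$ is $R'$, and since \Cref{moreconnec}'s construction is an injection, $R$ is uniquely determined by $R'$, allowing $T$ to be reconstructed.

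The main technical obstacle is verifying that the root-changing case of \Cref{moreconnec}'s surgery (which arises when the maximum label of $R$ lies in the first twig of $R$, i.e.\ when $C_j$ is a single-twig compartment) integrates cleanly into $T'$. This is settled by observing that after the surgery the new root of $R'$ is once again attached as the leftmost child of the same parent vertex, so the trunk of $T'$ extends naturally into $R'$ and the compartment bookkeeping above goes through without change.
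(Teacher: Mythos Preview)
Your argument is correct, but the paper proceeds differently. Instead of building a single injection from $(j+1)$-compartment trees to $j$-compartment trees, the paper exploits the exponential-structure recurrence
\[
C(S,n,j)=\sum_{k=1}^{n-1}\binom{n-1}{k-1}C(S,k,1)\,C(S,n-k,j-1)\qquad(j\ge 2),
\]
obtained by isolating the connected component containing the label $n$. Comparing the expressions for $C(S,n,j)$ and $C(S,n,j+1)$ term by term, one sees that the inequality for index $j$ follows from the inequality for index $j-1$ applied to smaller $n$; by induction on $j$ the whole statement reduces to the single base case $C(S,n,1)\ge C(S,n,2)$, which is an immediate consequence of \Cref{moreconnec}.

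So the paper's proof is a two-line induction layered on top of \Cref{moreconnec}, whereas you unwind this induction bijectively: your $\Phi$ freezes the first $j-1$ compartments (corresponding to the recurrence peeling off one connected piece at a time) and applies the surgery of \Cref{moreconnec} only to the residual two-compartment tail. What you gain is an explicit injection witnessing $C(S,n,j)\ge C(S,n,j+1)$ at the level of trees, which the paper's argument does not directly provide; what the paper gains is brevity, since no additional boundary checks (attachment of $R'$ at $p$, the root-changing case, preservation of the compartment count) are required once the recurrence is in hand.
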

\begin{proof}
Using the ``connected structure'' property of branches and considering the size of the branch containing the label $n$, we get for $j \geq 2$,
\begin{equation*}
    C(S,n,j) = \sum_{k=1}^{n-1}\binom{n-1}{k-1}C(S,k,1)C(S,n-k,j-1).
\end{equation*}
By induction on $j$, this shows that it is enough to prove that $C(S,n,1) \geq C(S,n,2)$ for all $n \geq 1$.
This follows from \Cref{moreconnec}.
\end{proof}

\subsection{Extended Catalan arrangement}

We now focus on the case when $S=\{-m,-m+1,\ldots,m-1,m\}$ for some $m \geq 1$.
The corresponding arrangement $\A_S(n)$ is called the $m$-Catalan arrangement in $\R^n$. 
We let $C(m, n, j)$ denote the absolute value of the coefficient of $t^j$ in $\chi_{\A_{S}(n)}(t)$. 
Here $\T_S(n)=\T^{(m)}(n)$ and hence, from \Cref{statthm}, $C(m, n, j)$ is the number of $(m + 1)$-ary trees with $n$ nodes and $j$ branches. 
We now compute expressions for $C(m, n, j)$ using this combinatorial interpretation.

\begin{proposition}
We have
\begin{equation*}
    C(m,n,j) = \sum_{k=j}^nT_m(n,k)\binom{n}{k}c(k,j)(n-k)!
\end{equation*}
where
\begin{itemize}
    \item $c(k,j)$ is the number of permutations of $[k]$ with $j$ right-to-left maxima (unsigned Stirling number of first kind), and
    \item $T_m(n,k)$ is the number of unlabeled $(m+1)$-ary trees with $n$ nodes, $k$ of which are on its trunk, given by
    \begin{equation*}
        \frac{mk}{(m+1)n-k}\binom{(m+1)n-k}{n-k}.
    \end{equation*}
\end{itemize}
\end{proposition}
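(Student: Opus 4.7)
The plan is to count trees in $\T^{(m)}(n)$ with $j$ branches by stratifying according to $k$, the number of nodes on the trunk. Any such labeled tree is determined by four independent pieces of data: (i) its underlying unlabeled $(m+1)$-ary shape with $n$ nodes and $k$ of them on the trunk, contributing $T_m(n,k)$; (ii) the $k$-subset of $[n]$ whose labels lie on the trunk, contributing $\binom{n}{k}$; (iii) the order in which these $k$ labels appear along the trunk from root to leftmost leaf; and (iv) a bijection between the remaining $n-k$ labels and the $n-k$ non-trunk nodes, contributing $(n-k)!$. Because branches are the right-to-left maxima of the trunk sequence, a statistic that only sees relative order, piece (iii) contributes $c(k,j)$ by \cite[Corollary 1.3.11]{ec1}. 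Multiplying and summing over $k$ (the constraint $k\geq j$ is automatic since $c(k,j) = 0$ for $k<j$) yields the claimed identity.

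What remains is to establish the closed form for $T_m(n,k)$. I would proceed via generating functions. Let $A(x)$ denote the ordinary generating function for unlabeled $(m+1)$-ary trees by number of nodes; decomposing at the root gives $A(x) = 1 + x A(x)^{m+1}$. A shape with exactly $k$ trunk nodes consists of a chain of $k$ trunk nodes (contributing $x^k$) in which the leftmost child of each non-terminal trunk node is prescribed to be the next trunk node and the leftmost child of the last trunk node is the leftmost leaf; the remaining $km$ children of the trunk nodes are freely the roots of arbitrary $(m+1)$-ary subtrees. Hence
\begin{equation*}
\sum_{n\geq k} T_m(n,k)\, x^n \;=\; x^k\, A(x)^{km},
\end{equation*}
so $T_m(n,k) = [x^{n-k}]\, A(x)^{km}$. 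Writing $B = A - 1$, so that $B = x(1+B)^{m+1}$ is of the Lagrangian form $B = x\phi(B)$ with $\phi(u) = (1+u)^{m+1}$, the Lagrange--B\"urmann formula applied to $H(u) = (1+u)^{km}$ gives
\begin{equation*}
T_m(n,k) \;=\; \frac{km}{n-k}\binom{(m+1)n - k - 1}{n - k - 1},
\end{equation*}
which rearranges, via the identity $\binom{N-1}{r-1} = \tfrac{r}{N}\binom{N}{r}$ with $N=(m+1)n-k$ and $r=n-k$, into the stated form $\tfrac{mk}{(m+1)n-k}\binom{(m+1)n-k}{n-k}$.

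The main obstacle is the Lagrange inversion bookkeeping for $T_m(n,k)$; the decomposition of a labeled tree into shape, trunk-labels, trunk-ordering and non-trunk labeling is essentially immediate once one recognises that branches are a trunk-only statistic. A more bijective alternative would be to derive $T_m(n,k)$ via a cycle-lemma argument, encoding unlabeled $(m+1)$-ary tree shapes with prescribed trunk length as cyclic equivalence classes of lattice paths, but the generating function route is the shortest path to the closed form.
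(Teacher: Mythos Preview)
Your four-step decomposition of a labeled tree into (i) unlabeled shape with prescribed trunk length, (ii) choice of trunk labels, (iii) trunk ordering with $j$ right-to-left maxima, and (iv) labeling of non-trunk nodes is exactly the paper's proof of the summation identity; on that part the two arguments are indistinguishable.

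Where you diverge is in justifying the closed form for $T_m(n,k)$. The paper does not compute it directly: it invokes the bijection in \cite[Section~8]{ber} between $(m+1)$-ary trees and generalized Dyck paths, observes that trunk nodes correspond to returns to the $x$-axis, and then cites the known count of $m$-Dyck paths with a prescribed number of returns from \cite{rethill}. Your route is instead a self-contained generating-function argument: the identity $\sum_{n\ge k}T_m(n,k)x^n = x^k A(x)^{km}$ followed by Lagrange--B\"urmann applied to $B=x(1+B)^{m+1}$. This is correct as written (the exponent bookkeeping $km-1+(m+1)(n-k)=(m+1)n-k-1$ and the binomial identity $\binom{N-1}{r-1}=\tfrac{r}{N}\binom{N}{r}$ both check out, and the boundary case $n=k$ gives $T_m(n,n)=1$ as it should). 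Your approach is more self-contained and avoids the two external citations, at the cost of carrying out the Lagrange inversion; the paper's approach is shorter on the page but relies on the reader tracking down the Dyck-path correspondence and the return-count formula elsewhere. Either is a legitimate proof.
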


\begin{proof}
Specifying a tree in $\T^{(m)}(n)$ with $j$ branches is done by
\begin{enumerate}
    \item choosing an unlabeled $(m+1)$-ary tree with $n$ nodes and $k \geq j$ nodes on its trunk,
    \item choosing the $k$ labels from $[n]$ for the trunk,
    \item labeling the trunk so that it has $j$ right-to-left maxima, and
    \item using the rest of the labels for the non-trunk nodes.
\end{enumerate}
The formula for $T_m(n,k)$ follows from the bijection between trees and Dyck paths given in \cite[Section 8]{ber} and the discussion about returns in Dyck paths in \cite[Page 4]{rethill}. 
One can check that trunk nodes correspond to returns in the Dyck path under this bijection.
\end{proof}

\begin{proposition}
We also have for any $m,n,j \geq 1$,
\begin{equation*}
    C(m,n,j)=\sum_{k=j}^n (-1)^{k-j}B_m(n,k)c(k,j)
\end{equation*}
where
\begin{equation*}
    B_m(n,k) = \frac{(n-1)!}{(k-1)!}\binom{(m+1)n}{n-k}.
\end{equation*}
\end{proposition}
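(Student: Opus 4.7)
The plan is to pass to the generating polynomial $P(t) := \sum_{j\geq 0} C(m,n,j)\, t^j$ and identify it with $\sum_k B_m(n,k)\, t^{\underline{k}}$, where $t^{\underline{k}} := t(t-1)\cdots(t-k+1)$ is the falling factorial. Since $t^{\underline{k}} = \sum_j (-1)^{k-j} c(k,j)\, t^j$ is the standard expansion of the falling factorial via signed Stirling numbers of the first kind, extracting the coefficient of $t^j$ on both sides of this identification will immediately yield the claimed formula.

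The first step is to produce a closed form for $P(t)$. The characteristic polynomial of the $m$-Catalan arrangement is classically given by $\chi_{\A_S(n)}(t) = t\prod_{i=1}^{n-1}(t-mn-i)$, and combined with $P(t) = (-1)^n\chi_{\A_S(n)}(-t)$ this gives
\[P(t) = t\prod_{i=1}^{n-1}(t+mn+i) = t(n-1)!\binom{t+(m+1)n-1}{n-1}.\]

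The second step is to transform $\sum_k B_m(n,k)\, t^{\underline{k}}$ into the same closed form. I would rewrite $t^{\underline{k}} = k!\binom{t}{k}$ and then use $k\binom{t}{k} = t\binom{t-1}{k-1}$ to absorb the factor of $k$; after the reindexing $\ell = k-1$, this yields
\[\sum_k B_m(n,k)\, t^{\underline{k}} = (n-1)! \cdot t \sum_{\ell\geq 0}\binom{t-1}{\ell}\binom{(m+1)n}{n-1-\ell}.\]
Vandermonde's convolution collapses the inner sum to $\binom{t+(m+1)n-1}{n-1}$, matching the closed form of $P(t)$ and completing the identification.

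The main obstacle is really just producing the closed form in the first step. The $m$-Catalan formula is classical and well documented, but for a self-contained treatment it can also be recovered from the ESA generating function via Lagrange inversion applied to the Fuss-Catalan series $C_m(x)$ satisfying $C_m(x) = 1 + xC_m(x)^{m+1}$, or alternatively by a more elaborate Vandermonde manipulation of the expression from the previous proposition. Once the closed form is in hand, the remainder of the argument is a brief and routine binomial computation.
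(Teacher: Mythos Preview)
Your argument is correct and complete. It is, however, a genuinely different route from the paper's own proof.

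The paper argues combinatorially: it interprets $B_m(n,k)$ as the number of ways to partition $[n]$ into $k$ blocks and place a labeled $(m+1)$-ary tree on each block, then uses the branch decomposition (the ``exponential structure'' established earlier) to expand this count as $B_m(n,k)=\sum_{i\ge k} C(m,n,i)\,S(i,k)$ with $S(i,k)$ the Stirling numbers of the second kind; the stated formula then drops out by Stirling inversion. Your approach instead imports the closed form $\chi_{\A_S(n)}(t)=t\prod_{i=1}^{n-1}(t-mn-i)$ for the $m$-Catalan characteristic polynomial from the literature, and matches it against $\sum_k B_m(n,k)\,t^{\underline{k}}$ via Vandermonde.

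What each buys: your route is shorter and purely algebraic once the product formula is granted, but it leans on an input external to the paper and does not explain why $B_m(n,k)$ should appear. The paper's route is self-contained relative to the combinatorial machinery it has built, and it gives $B_m(n,k)$ an intrinsic meaning (partitions into $k$ blocks with a tree on each), which is in keeping with the section's stated aim of extracting properties \emph{from} the tree interpretation rather than from known characteristic-polynomial formulas.
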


\begin{proof}
It can be checked, for example using \cite[Theorem 5.3.10]{ec2}, that $B_m(n,k)$ is the number of ways to partition $[n]$ into $k$ blocks and associate to each block $B$ a tree in $\T^{(m)}(B)$. 
Using the fact that branches give the trees an exponential structure, another way to partition $[n]$ into $k$ blocks and associate to each block $B$ a tree in $\T^{(m)}(B)$ is to
\begin{enumerate}
    \item select a collection of $i \geq k$ connected trees whose label sets partition $[n]$ (which is the same as choosing a tree in $\T_i^{(m)}(n)$),
    \item partition these trees into $k$ sets, and
    \item combine the connected trees in each of these $k$ sets to form $k$ trees. 
\end{enumerate}
This shows
\begin{equation*}
    B_m(n,k)=\sum_{i=k}^{n}C(m,n,i)S(i,k)
\end{equation*}
where $S(i,k)$ is the number of ways to partition the set $[i]$ into $k$ blocks, \textit{i.e.}, they are Stirling numbers of the second kind.
The result now follows by M\"obius inversion, since by \cite[Proposition 1.9.1]{ec1}, we have
\begin{equation*}
    \sum_{k \geq j}(-1)^{k-j}S(i,k)c(k,j) = \begin{cases}
                0, & \text{if }i \neq j\\
                1, & \text{if }i = j.
            \end{cases}
\end{equation*}
\end{proof}

\begin{remark}
The triangle of numbers $C(1,n,j)$ is listed in the OEIS \cite{oeis} as \href{https://oeis.org/A038455}{A038455}.
For $m \geq 2$, the triangle $C(m,n,j)$ does not seem to be listed.
\end{remark}

We now derive some properties of $C(m,n,j)$. 
First, we list properties that are consequences of the general properties we have already seen.

\begin{proposition}
For any $m,n,j \geq 1$, we have:
\begin{enumerate}
\setlength{\itemsep}{0.35cm}
    \item $C(m,n,j) \leq C(m+1,n,j)$
    \item $C(m,n,j) \leq C(m,n+1,j)$
    \item $C(m,n,1) \geq \sum\limits_{k=2}^{n}C(m,n,k)$
    \item $C(m,n,j) \geq C(m,n,j+1)$
\end{enumerate}
\end{proposition}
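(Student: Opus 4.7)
The plan is to observe that everything here is a corollary of the four general monotonicity and domination results established earlier in the section, once one checks that $S=\{-m,-m+1,\dots,m-1,m\}$ satisfies all the required hypotheses.

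First I would verify the hypotheses once and for all. The set $S=\{-m,\dots,m\}$ is transitive (already noted in the example after the definition of transitivity), contains $0$, and, since $m\ge 1$, contains $1$ and $-1$, so the pair $(k,-k)=(1,-1)$ witnesses the stronger hypothesis used in Propositions \ref{moreconnec} and \ref{coeffinc}. Thus every structural inequality proved in this section is at our disposal for $S=\{-m,\dots,m\}$.

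Then each of the four items is a one-line invocation. For (1), take $S'=\{-m,\dots,m\}\subseteq\{-(m+1),\dots,m+1\}=S$; both are transitive, so the first monotonicity proposition of the section gives $C(m,n,j)\le C(m+1,n,j)$. For (2), apply the proposition asserting $C(S,n,j)\le C(S,n+1,j)$ to this same $S$. For (3), apply Proposition \ref{moreconnec}, whose hypothesis is satisfied as noted above, to get $C(m,n,1)\ge\sum_{k=2}^{n}C(m,n,k)$. For (4), apply Proposition \ref{coeffinc} under the same hypothesis to obtain $C(m,n,j)\ge C(m,n,j+1)$.

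There is no genuine obstacle here: the only thing to record is the hypothesis check. If anything, the mild subtlety is in (1), where one must confirm that $C(S,n,j)$ is indeed defined via $\T_S(n)\subseteq\T^{(m')}(n)$ with $m'=\max\{|s|:s\in S\}$, so that the inclusion $\T_{S'}\hookrightarrow\T_S$ described in the proof of that proposition (obtained by appending extra leaf children when $m'>m$) preserves the number of branches and hence the branch statistic; this was already verified in the proof of the first proposition of the section and requires no further work.
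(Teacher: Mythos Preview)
Your proposal is correct and matches the paper's approach exactly: the paper presents this proposition as a list of ``properties that are consequences of the general properties we have already seen'' and gives no separate proof, so the intended argument is precisely the hypothesis check plus four one-line invocations you describe.
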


We now list some properties that are specific to the case of the extended Catalan arrangement.

\begin{proposition}
For any $m,n \geq 1$, we have
\begin{equation*}
    C^{(m)}(n):=\sum_{j=1}^{n}C(m,n,j) = \frac{n!}{mn+1}\binom{(m+1)n}{n}.
\end{equation*}
\end{proposition}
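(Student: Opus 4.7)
The plan is to combine Zaslavsky's theorem with Bernardi's bijection and the classical Fuss--Catalan enumeration of $(m+1)$-ary trees.

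First, I would observe that $C(m,n,0) = 0$ for all $n \geq 1$. This is visible from the previous proposition's formula
\[
C(m,n,j) = \sum_{k=j}^n T_m(n,k)\binom{n}{k}c(k,j)(n-k)!,
\]
since $T_m(n,k)$ carries a factor of $mk$ while $c(k,0) = 0$ for $k \geq 1$. Equivalently: in any tree in $\T^{(m)}(n)$, the last node of the trunk is automatically a branch node, so at least one branch always exists. Consequently, by \Cref{zas},
\[
C^{(m)}(n) = \sum_{j=1}^n C(m,n,j) = \sum_{j=0}^n C(m,n,j) = r(\A_{\{-m,\ldots,m\}}(n)).
\]

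Next, I would apply Bernardi's bijection (\cite[Theorem 3.8]{ber}) to rewrite this as $|\T_S(n)|$ for $S = \{-m,\ldots,m\}$. For this $S$, Condition $S$ is vacuous: in any $(m+1)$-ary tree, $\operatorname{lsib}(j) \in \{0,1,\ldots,m\}$, so both $\operatorname{lsib}(j)$ and $-\operatorname{lsib}(j)$ lie in $S \cup \{0\} = S$, and neither implication in \Cref{Streesdef} is ever triggered. Therefore $\T_S(n) = \T^{(m)}(n)$, and the preceding paragraph gives $C^{(m)}(n) = |\T^{(m)}(n)|$.

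Finally, the count of $\T^{(m)}(n)$ factors as $n!$ times the number of unlabeled $(m+1)$-ary trees on $n$ nodes, which is the classical Fuss--Catalan number $\tfrac{1}{mn+1}\binom{(m+1)n}{n}$. This yields
\[
C^{(m)}(n) = \frac{n!}{mn+1}\binom{(m+1)n}{n},
\]
as required. No serious obstacle arises; the only nontrivial ingredient is the Fuss--Catalan enumeration, which can either be quoted directly from the classical literature or recovered here by summing the trunk-refined count $T_m(n,k)$ of the preceding proposition over $k = 1, \ldots, n$ (a standard cycle-lemma / Vandermonde computation).
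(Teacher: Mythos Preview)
Your argument is correct and lands on exactly the same identification as the paper: $\sum_j C(m,n,j) = |\T^{(m)}(n)|$, followed by the Fuss--Catalan count of labeled $(m+1)$-ary trees. The only difference is that you re-derive this identity via Zaslavsky and Bernardi's bijection, whereas the paper has already recorded (just before this proposition) that $\T_S(n)=\T^{(m)}(n)$ and that $C(m,n,j)$ counts trees in $\T^{(m)}(n)$ with $j$ branches, so the sum over $j$ is immediately $|\T^{(m)}(n)|$ without passing again through the region count.
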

\begin{proof}
This follows from the known formula for $|\T^{(m)}(n)|$ (see \cite[Section 5.3]{ec2}).
\end{proof}

\begin{proposition}
For any $m,n\geq 1$, we have
\begin{equation*}
    C^{(m)}(n) \leq C(m+1,n,1).
\end{equation*}
\end{proposition}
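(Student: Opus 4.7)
The plan is to exhibit an explicit injection
\[
\phi:\T^{(m)}(n)\hookrightarrow \T_1^{(m+1)}(n).
\]
Note that $S=\{-(m+1),\dots,m+1\}$ is transitive and satisfies $\T_S(n)=\T^{(m+1)}(n)$ (Condition $S$ in \Cref{Streesdef} is vacuous in this case, exactly as in the extended Catalan section), so \Cref{statthm} gives $|\T_1^{(m+1)}(n)|=C(m+1,n,1)$. Combined with $C^{(m)}(n)=|\T^{(m)}(n)|$ from the preceding proposition, any such injection immediately yields $C^{(m)}(n)\leq C(m+1,n,1)$.

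The map I would use is very simple. Given $T\in\T^{(m)}(n)$, let $\phi(T)=T'$ have the same nodes with the same labels as $T$, and at every node of $T$ prepend a brand-new leaf so that it becomes the new leftmost child, leaving the $m+1$ original children in their original order to the right of it. Then $T'$ is an $(m+2)$-ary labeled tree on $[n]$, so $T'\in\T^{(m+1)}(n)$. The crucial point is that by construction the leftmost child of the root of $T'$ is a leaf, so the trunk of $T'$ consists of a single vertex (the root). A one-element trunk sequence has exactly one right-to-left maximum, so $T'$ has exactly one branch, placing it in $\T_1^{(m+1)}(n)$.

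Injectivity of $\phi$ is immediate: at every node of $T'$ the leftmost child is a leaf by construction, and deleting all these leaves recovers $T$ uniquely. There is really no substantive obstacle here; the key insight is that the one-branch condition is automatically forced the moment the root's leftmost child is a leaf, so a single uniform local modification (``prepend one leaf at each node'') is enough to land inside $\T_1^{(m+1)}(n)$.
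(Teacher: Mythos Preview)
Your proof is correct and is essentially the same as the paper's: the paper's one-line argument is precisely ``adding a leaf as the first child of each node in a labeled $(m+1)$-ary tree gives the required result,'' which is exactly your map $\phi$. Your write-up simply spells out explicitly why this lands in $\T_1^{(m+1)}(n)$ (the trunk of $T'$ collapses to the root alone) and why the map is injective.
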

\begin{proof}
Adding a leaf as the first child of each node in a labeled $(m+1)$-ary tree gives the required result.
\end{proof}

\begin{proposition}
For any $m,n \geq 1$, we have
\begin{equation*}
    C^{(m)}(n) \leq C(m,n+1,1).
\end{equation*}
\end{proposition}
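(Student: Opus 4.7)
The plan is to exhibit an explicit injection $\phi$ from $\T^{(m)}(n)$ into the set of one-branch trees in $\T^{(m)}(n+1)$. Since $C^{(m)}(n) = |\T^{(m)}(n)|$ by the preceding proposition, and $C(m,n+1,1)$ is by \Cref{statthm} precisely the number of trees in $\T^{(m)}(n+1)$ with a single branch, any such injection immediately yields the desired inequality.

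Given $T \in \T^{(m)}(n)$, I would let $\ell$ denote the leftmost leaf of $T$, that is, the terminal vertex of its trunk, and define $\phi(T)$ by replacing $\ell$ with a new node carrying the label $n+1$ whose $m+1$ children are all leaves; all other labels stay fixed. The resulting tree lies in $\T^{(m)}(n+1)$ because the $(m+1)$-ary structure is preserved and the label set is exactly $[n+1]$. This is in the same spirit as the construction used earlier for $C(S,n,j) \le C(S,n+1,j)$, but with the crucial difference that the freshly inserted node receives the \emph{largest} label rather than the smallest; that choice is what forces the image into the connected class.

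To verify that $\phi(T)$ has one branch, observe that the trunk of $\phi(T)$ is obtained from the trunk of $T$ by appending the new node $n+1$ in place of $\ell$, and its own leftmost (leaf) child now serves as the terminal leaf. Consequently the trunk label sequence of $\phi(T)$ is that of $T$ followed by a final entry $n+1$, which is automatically the unique right-to-left maximum, so $\phi(T)$ has exactly one branch. Injectivity is then immediate: in any $T' = \phi(T)$ the last node on the trunk must be the one labeled $n+1$ and must have only leaf children, so removing it and putting back a single leaf in its place reconstructs $T$ uniquely. I do not foresee any genuine obstacle in this argument; the only substantive observation is that appending the maximum label at the very bottom of the trunk is exactly what guarantees the one-branch property, so no finer bookkeeping is needed.
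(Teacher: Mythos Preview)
Your proof is correct and is essentially identical to the paper's: both replace the leftmost leaf of a tree in $\T^{(m)}(n)$ by a new node labeled $n+1$ with all-leaf children, yielding a one-branch tree in $\T^{(m)}(n+1)$. You simply spell out in more detail the one-branch property and injectivity that the paper leaves implicit.
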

\begin{proof}
The result follows by replacing the leftmost leaf of a tree in $\T^{(m)}(n)$ with a node labeled $n+1$ with all children being leaves.
\end{proof}

There are several combinatorial objects that correspond to the regions of the extended Catalan arrangement (especially in the case $m=1$, see \cite{cat}).
One such is the generalized Dyck paths.
We now describe a corresponding statistic for these Dyck paths.

A labeled $m$-Dyck path on $[n]$ is a sequence of $(m+1)n$ terms where
\begin{enumerate}
    \item $n$ terms are `$+m$',
    \item $mn$ terms are `$-1$',
    \item the sum of any prefix of the sequence is non-negative, and
    \item each $+m$ term is given a distinct label from $[n]$.
\end{enumerate}

A labeled $m$-Dyck path on $[n]$ can be drawn in $\mathbb{R}^2$ in the natural way.
Start the path at $(0,0)$, read the labeled $m$-Dyck path and for each term move by $(1,m)$ if it is $+m$ and by $(1,-1)$ if it is $-1$.
Also, label each $+m$ step with its corresponding label in $[n]$.

A Dyck path breaks up into \textit{primitive} parts based on when it touches the $x$-axis.
If a labeled Dyck path has $k$ primitive parts, then we break the path into \textit{compartments} as follows.
If the number $n$ is in the $i_1^{th}$ primitive part, then the primitive parts up to the $i_1^{th}$ form the first compartment.
Let $j$ be the largest number in $[n] \setminus A$ where $A$ is the set of numbers in first compartment.
If $j$ is in the $i_2^{th}$ primitive part then the primitive parts after the $i_1^{th}$ up to the $i_2^{th}$ form the second compartment.
Continuing this way, we break up a labeled Dyck path into compartments.

\begin{example}
The labeled $1$-Dyck path on $[7]$ given in \Cref{dyck} has $3$ primitive parts and $2$ compartments.
\end{example}

\begin{figure}[!htbp]
    \centering
    \begin{tikzpicture}[xscale=0.65]
    \draw [thin,green] (8,0)--(22,0);
    \node (8) [circle,inner sep=2pt,fill=black] at (8,0) {};
    \node (9) [circle,inner sep=2pt,fill=black] at (9,1) {};
    \node (10) [circle,inner sep=2pt,fill=black] at (10,2) {};
    \node (11) [circle,inner sep=2pt,fill=black] at (11,3) {};
    \node (12) [circle,inner sep=2pt,fill=black] at (12,2) {};
    \node (13) [circle,inner sep=2pt,fill=black] at (13,1) {};
    \node (14) [circle,inner sep=2pt,fill=black] at (14,2) {};
    \node (15) [circle,inner sep=2pt,fill=black] at (15,1) {};
    \node (16) [circle,inner sep=2pt,fill=black] at (16,0) {};
    \node (17) [circle,inner sep=2pt,fill=black] at (17,1) {};
    \node (18) [circle,inner sep=2pt,fill=black] at (18,2) {};
    \node (19) [circle,inner sep=2pt,fill=black] at (19,1) {};
    \node (20) [circle,inner sep=2pt,fill=black] at (20,0) {};
    \node (21) [circle,inner sep=2pt,fill=black] at (21,1) {};
    \node (22) [circle,inner sep=2pt,fill=black] at (22,0) {};
    \draw (8)--node[left] {$4$}(9)--node[left] {$7$}(10)--node[left] {$2$}(11)--(12)--(13)--node[left] {$6$}(14)--(15)--(16)--node[left] {$3$}(17)--node[left] {$1$}(18)--(19)--(20)--node[left] {$5$}(21)--(22);
    \draw [blue,edgee] (8) to [bend right] (16);
    \draw [blue,edgee] (16) to [bend right] (22);
    \end{tikzpicture}
    \caption{A labeled $1$-Dyck path with compartments specified.}
    \label{dyck}
\end{figure}
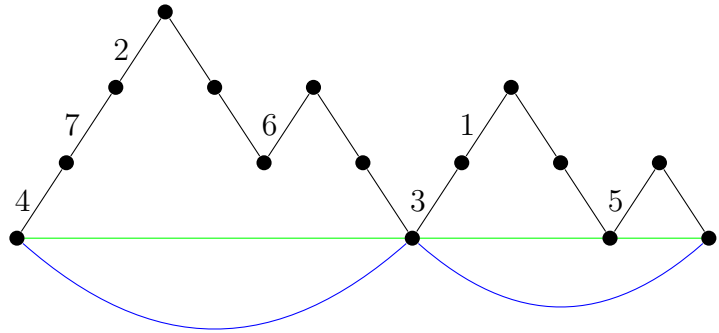

It can be checked that this is a valid break-up of Dyck paths into connected structures.
Thus have the following.

\begin{theorem}
The number of labeled $m$-Dyck paths on $[n]$ with $j$ compartments is $C(m,n,j)$.
\end{theorem}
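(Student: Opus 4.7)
My plan is to recognize that the compartment decomposition endows labeled $m$-Dyck paths with the structure of an exponential family in the sense of \Cref{prelim}, and to match it against the tree family $\T^{(m)}(n)$, whose branch decomposition has already been shown to produce the coefficients $C(m,n,j)$ in \Cref{statthm}. The two families share the same total count $C^{(m)}(n)$, so once the Dyck-path side is shown to be exponential, the EGF identity will force the distribution of compartments to coincide with the distribution of branches.

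First I would fix the data: let $h(n)$ be the number of labeled $m$-Dyck paths on $[n]$ and $c(n)$ the number of those with a single compartment, equivalently those in which $n$ lies in the last primitive part. Writing $c_j(n)$ for the number of labeled $m$-Dyck paths on $[n]$ with $j$ compartments, I need to verify the combinatorial identity
\begin{equation*}
c_j(n) = \sum_{\{B_1,\ldots,B_j\}\in\Pi_n} c(|B_1|)\cdots c(|B_j|).
\end{equation*}
The forward map sends a path with compartments $(C_1,\ldots,C_j)$ to the label-set partition $\{B_1,\ldots,B_j\}$ together with the connected paths $D_i = C_i$, each viewed as a Dyck path on $B_i$; note that $D_i$ is indeed connected because the compartment definition places $\max B_i$ in its last primitive part. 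The inverse takes disjoint-labeled connected paths, sorts them in strictly decreasing order of maximum label, and concatenates them.

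With the exponential structure in place, \cite[Example 5.2.2]{ec2} gives
\begin{equation*}
\sum_{n,j\geq 0} c_j(n) t^j \frac{x^n}{n!} = \left(\sum_{n\geq 0} h(n)\frac{x^n}{n!}\right)^t.
\end{equation*}
Under the standard bijection between $m$-Dyck paths and $(m+1)$-ary trees, $h(n)=|\T^{(m)}(n)|=C^{(m)}(n)$, which matches the total count of the tree exponential structure. Applying the same formula to the tree family yields the analogous identity with $c_j(n)$ replaced by $C(m,n,j)$, and comparing coefficients forces $c_j(n)=C(m,n,j)$.

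The step I expect to be the main obstacle is checking that the concatenation operation genuinely recovers the chosen connected pieces as the compartments of the resulting path. This should reduce to the observation that the maxima of the sorted pieces decrease strictly, so the iterative ``locate the largest remaining label'' rule in the definition of a compartment peels off the pieces $D_1,D_2,\ldots,D_j$ one by one in the correct order.
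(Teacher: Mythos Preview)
Your proposal is correct and follows essentially the same approach as the paper. The paper's proof consists of the single remark that compartments give a valid break-up of labeled $m$-Dyck paths into connected structures, after which the result follows by comparing against the tree side via the exponential-structure identity from \Cref{prelim}; you have simply spelled out the forward and inverse maps and the matching of $h(n)$ that the paper leaves implicit.
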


We say that a labeled Dyck path has $j$ right-to-left maxima if the string of labels before its first down step has $j$ right-to-left maxima.
For example, the string of labels before the first down step in the Dyck path in \Cref{dyck} is $4,7,2$.
Hence, it has $2$ right-to-left maxima.

Using the bijection between labeled trees and labeled Dyck paths given in \cite{ber} and the result in \Cref{bstat}, we get another statistic on labeled Dyck paths with the same distribution.

\begin{theorem}
The number of labeled $m$-Dyck paths on $[n]$ with $j$ right-to-left maxima is $C(m,n,j)$.
\end{theorem}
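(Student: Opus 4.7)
The plan is to reduce this statement to the immediately preceding tree-level result (\Cref{statthm} together with the interpretation of branches as right-to-left maxima of the trunk sequence) by transporting the statistic across Bernardi's bijection between $\mathcal{T}^{(m)}(n)$ and labeled $m$-Dyck paths on $[n]$. So I would first recall (or cite from \cite{ber}) the explicit form of this bijection $\Phi$, namely the standard depth-first encoding that walks around the tree and records a $+m$ step (carrying the node's label) each time a node is visited for the first time, and a $-1$ step for each of the $m$ down-edges following the visit to that node's $m{+}1$ children.

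The core observation I would isolate is this: the $+m$ steps appearing before the very first $-1$ step in $\Phi(T)$ are exactly the labels read along the trunk of $T$, in order. Indeed, starting from the root, the traversal emits a $+m$ step for each node it meets, and it continues to emit $+m$ steps as long as the current node has a node (rather than a leaf) as its leftmost child; the first $-1$ step is produced precisely when the traversal first encounters a leaf, which by definition of the trunk is the leftmost leaf of $T$. Hence, if $v_1, v_2, \ldots, v_k$ are the trunk nodes of $T$ (in the order used in \Cref{bstat}), then the initial block of $+m$-labels of $\Phi(T)$ is literally the sequence $v_1, v_2, \ldots, v_k$.

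With this identification in hand, the two statistics on $\Phi(T)$ and $T$ coincide term-by-term: the number of right-to-left maxima of the initial $+m$-labels of $\Phi(T)$ equals the number of right-to-left maxima of $v_1, \ldots, v_k$, which by the definition in \Cref{bstat} is precisely the number of branches of $T$. Since $\Phi$ is a bijection, the number of labeled $m$-Dyck paths on $[n]$ with $j$ right-to-left maxima equals the number of trees in $\mathcal{T}^{(m)}(n)$ with $j$ branches, and by \Cref{statthm} this is $C(m,n,j)$.

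The only genuinely non-routine step is the trunk-versus-initial-ascent identification in the middle paragraph; everything else is transport along the bijection. I would expect this to be a short verification by inspecting Bernardi's definition of $\Phi$ (or equivalently, by induction on the number of nodes on the trunk, peeling off the root and recursing on its leftmost subtree while noting that the siblings of the root contribute only after the first $-1$ step). Once that identification is made precise, the rest of the argument is immediate.
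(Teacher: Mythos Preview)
Your approach is exactly the one the paper sketches: transport the branch statistic from $\mathcal{T}^{(m)}(n)$ to labeled $m$-Dyck paths via a tree--Dyck bijection, and invoke the result of \Cref{bstat}. The paper's own proof is a one-line pointer to \cite{ber} and \Cref{bstat}; you have simply supplied the missing verification that the trunk sequence becomes the initial run of $+m$-labels.

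One caution worth noting. Earlier in this section the paper asserts that under Bernardi's bijection from \cite[Section 8]{ber}, trunk nodes correspond to \emph{returns} of the Dyck path, not to the initial ascent. That is incompatible with your identification if you take $\Phi$ to be literally Bernardi's map: under a right-to-left depth-first encoding one gets trunk $\leftrightarrow$ returns, whereas under the left-to-right \L{}ukasiewicz encoding you describe one gets trunk $\leftrightarrow$ initial ascent. Your argument is entirely correct for the leftmost-first encoding you actually wrote down, so the fix is purely cosmetic: either (i) do not attribute your $\Phi$ to \cite{ber} but instead state it directly as the leftmost-first \L{}ukasiewicz bijection, or (ii) keep Bernardi's map and insert the obvious left--right reflection of $\mathcal{T}^{(m)}(n)$ (which is a bijection since for the $m$-Catalan case there are no label constraints) to swap leftmost and rightmost spines before applying $\Phi$. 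Either way the count is the same, and your core step---that the labels of the initial ascent are, in order, the trunk labels $v_1,\ldots,v_k$---is the right thing to check.
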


\section{Concluding remarks}

We note that a combinatorial interpretation for the coefficients of the characteristic polynomial of the Linial arrangement is already given in \cite[Corollary 4.2]{stanint}.
This is in terms of alternating trees.

For various deformations of the braid arrangement, expressions for the characteristic polynomials are known (for example, see \cite{ard,athanas96}).
Hence, for transitive sets $S$, these can be used to extract coefficients and hence give formulas for the number of trees in $\T_S$ according to number nodes and branches.

\section{Acknowledgements}

The authors are partially supported by a grant from the Infosys Foundation.
We thank the anonymous referee for helpful suggestions.

\end{document}